\documentclass[a4paper,12pt]{amsart}

\usepackage{a4wide}
\usepackage{pgf,tikz}
\usepackage{amssymb}
\usepackage{enumerate}
	
\usepackage{amsmath}

\newif\ifdetails
\detailstrue
\newcommand{\DETAIL}[1]%
{\ifdetails\par\fbox{\begin{minipage}{0.9\linewidth}\textit{Detail:}
      #1\end{minipage}}\par\fi}
\newcommand{\TODO}[1]%
{\ifdetails\par\fbox{\begin{minipage}{0.9\linewidth}\textbf{TODO:}
      #1\end{minipage}}\par\fi}

\usepackage{makecell}
\usepackage{relsize}

\usepackage[pagewise,displaymath, mathlines]{lineno}

\newtheorem{lemma}{Lemma}
\newtheorem{proposition}[lemma]{Proposition}
\newtheorem{theorem}[lemma]{Theorem}

\theoremstyle{remark}

\DeclareMathOperator{\N}{N}

\DeclareMathOperator{\V}{V}

\usepackage{caption}
\usepackage{subcaption}

\usepackage{float} 
\usepackage[pdftex,a4paper,citecolor = blue, colorlinks=true,urlcolor=blue]{hyperref}
\usepackage{mathrsfs}
\usetikzlibrary{arrows}

\newcommand{\old}[1]{{}}

\title{Bicyclic graphs with the smallest and largest numbers of connected sets}

\author{Audace A. V. Dossou-Olory}
\thanks{}

\address{Audace A. V. Dossou-Olory \\ Institut National de l'Eau \\ and  Institut de Math\'ematiques et de Sciences Physiques, Dangbo \\ Universit\'e d'Abomey-Calavi, B\'enin \\ \newline \textbf{ORCID} \url{0000-0003-2065-117X}}
\email{audace@aims.ac.za} \email{audace.dossouolory@uac.bj}

\subjclass[2020]{Primary 05C30; secondary 05C35, 05C69, 05C75}
\keywords{bicyclic graphs, connected sets, extremal structures, induced subgraphs}

\begin{document}

\begin{abstract}	
For a graph $G$ with vertex set $V$, let $\N(G)$ denote the number of nonempty subsets of $V$ that induce a connected graph in $G$. In this paper, we focus on determining $\N(G)$ for $G$ in the family $\mathbb{B}_n$ of $n$-vertex bicyclic graphs. We find in $\mathbb{B}_n$ the structures of those graphs that possess the smallest, the largest, as well as the second-largest values of $\N(G)$. Moreover, we compute the extreme values of $\N(G)$ over $ \mathbb{B}_n$.
\end{abstract}

\maketitle

\section{Introduction}\label{Intro:main}

All graphs considered in this paper are undirected, simple (no loop and no parallel edge), and of finite order (i.e. the number of vertices). A connected set in a graph $G=(V(G), E(G))$ is a nonempty set $S\subseteq V(G)$ that induces a connected subgraph in $G$. The edge set of the graph induced by $S$ consists of those edges of $G$ whose both endvertices belong to $S$. In this paper, we are interested in the overall number of connected sets in a connected graph $G$, henceforth denoted by $\N(G)$. Subgraphs constitute fundamental components of a graph, offering key insights into its intrinsic structure. For example, they can serve to identify essential building blocks and to uncover the functional characteristics of complex networks~\cite{Milo2002}.

Over the past, much work has been devoted to determining the maximum and the minimum numbers of connected sets in different families of $n$-vertex graphs (i.e. graphs of order $n$), and also to characterising graphs attaining the extreme values. As a trivial example, precisely the complete graph achieves the maximum number of connected sets among all graphs on $n$ vertices. The literature contains a substantial number of research papers studying the extreme numbers of connected sets in acyclic graphs (i.e. in trees). As far as we know, this line of study on trees goes back to Sz\'ekely and Wang~\cite{Wang2005,Wang2005Plus}; trees with given degree sequence were considered in~\cite{AndriantianaV2021,Andriantiana2013,Zhang2015,ZhangZhang2013}; paper~\cite{Andriantiana2017} studies trees with given segment sequence; trees with given diameter appeared in~\cite{ZichongChen}; paper~\cite{AudDank2020} focuses on trees with given eccentric sequence; and many more, just to mention a few.

\medskip
We know that every $n$-vertex connected graph possesses $n-1$ edges or more, with the bound reached by trees only. A natural family of graphs after that of trees is perhaps the family of unicyclic graphs -- connected $n$-vertex graphs having exactly $n$ edges. In~\cite{Audacegenral2018}, the structures of $n$-vertex unicyclic graphs that reach the extreme numbers of connected sets are completed derived, among other things. One of the questions at the end of paper~\cite{Audacegenral2018} asks for constructive characterisation of the graphs that extremise the number of connected sets, over all connected graphs with prescribed order and number of cycles. In general, this
problem can appear out of reach due to the various ways in which the
cycles may intersect. However, special cases of so-called \textit{bicyclic
graphs} are still of interest. A bicyclic graph is a connected graph whose number of edges equals one more the number of vertices. Such a graph necessarily contains two or three different cycles. Unicyclic and bicyclic graphs are among the most popular tree-like structures, and various graph invariants and parameters have been studied in these families.

In this paper, we shall determine close formulas, as a function of $n$, for the smallest and largest possible numbers of connected sets in a $n$-vertex bicyclic graph, and further derive all extreme graph structures. Some known extremal results on trees and unicyclic graphs will play an important role in our current context of determining the number of connected sets of a $n$-vertex bicyclic graph. Our main underlying trick relies on certain graph transformations that modify the number of connected sets, while preserving both the numbers of vertices and edges.

\medskip
Let us also mention other work related to the topic. We studied in~\cite{AudaceGirth2018} the two problems of maximising the number of connected sets of a unicyclic graph with prescribed order and girth, or prescribed order, girth, and number of pendant vertices. We determined in~\cite{AudaceCut2019} the maximum number of connected sets in a connected graph with given order and number of cut vertices, the minimum number among all cut vertex-free connected graphs, and the extreme numbers as a function of both order and number of pendant vertices. However, the result on the maximum, given $n$ vertices of which $p$ are pendant, first appeared in~\cite{Nordhauss} as a Nordhaus-Gaddum type. Paper~\cite{Audacegenral2018} extended~\cite{AudaceCut2019} to the maximum number of connected sets in unicyclic graphs with given order and number of cut vertices. In~\cite{AudTOUT}, we investigated the maximum of $\N(G)$ where $G$ is a unicyclic graph with $n$ vertices of which $c$ are cut vertices, and found that there are generally two extremal structures. For each of the above mentionned graph families, all extremal graph structures were characterised.

\medskip
The current paper strictly focuses on $n$-vertex bicyclic graphs. We define our first notation and state some initial results in Section~\ref{ch:Prelimi}; our main finding are covered in Section~\ref{Sect:Extremal}.

\section{Preliminary} \label{ch:Prelimi}

Let $G=(V(G), E(G))$ be a connected graph. For $u,v \in V(G)$, we denote by $\N(G)_v$ and $\N(G)_{u,v}$ the number of connected sets in $G$ that involve $v$, and those that contain both $u$ and $v$, respectively. On the other hand, $G-v$ denotes the graph that remains upon deleting vertex $v$ in $G$; more generally, $G-S$ represents the graph obtained from $G$ by deleting all elements of $S$.

A $u-v$ path in $G$ is a path between vertices $u$ and $v$, i.e. $(u=)w_0 w_1 \ldots w_{m-1} w_m(=v)$ such that $w_i w_{i+1} \in E(G)$ for all $0\leq i \leq m-1$. As usual, the $n$-vertex path will be denoted by $P_n$. A \emph{pendant vertex} (or a leaf) in $G$ is a vertex of degree $1$; an edge incident with a pendant vertex is called a \emph{pendant edge}.

By $C_n= v_0v_1v_2\ldots v_{n-1}$ we mean the cycle whose vertices are $v_0,v_1,v_2,\ldots, v_{n-1}$ in this order, i.e. $v_{n-1}$ is adjacent to $v_0$, and $v_jv_{j+1}$ is an edge for all $0\leq j \leq n-2$.  

\medskip
We mention some known results that are relevant to us.

\begin{lemma}[\cite{Wang2005}]\label{PnPnu}
For the path $P_n$ and $u$ a leaf of $P_n$, we have
\begin{align*}
\N(P_n)=\binom{n+1}{2}\,, \quad \text{and} \quad \N(P_n)_u=n \,.
\end{align*}
\end{lemma}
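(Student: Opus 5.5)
The plan is to characterise the connected sets of a path directly and then count them. Label $P_n = w_1 w_2 \ldots w_n$. The key observation is that a nonempty set $S \subseteq V(P_n)$ induces a connected subgraph exactly when $S$ consists of consecutive vertices, that is, $S = \{w_i, w_{i+1}, \ldots, w_j\}$ for some $1 \le i \le j \le n$.

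\emph{Sufficiency.} Such a set clearly induces the subpath $w_i \ldots w_j$, which is connected.

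\emph{Necessity.} Suppose $S$ is connected, and let $i$ and $j$ be the smallest and largest indices of vertices in $S$. If some $w_k$ with $i<k<j$ were missing from $S$, then deleting $w_k$ would separate $w_i$ from $w_j$ in $P_n$. Every $w_i$--$w_j$ path in $P_n$ passes through $w_k$, so none survives in the subgraph induced by $S$, a contradiction.

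This bijection between connected sets and pairs $(i,j)$ with $1\le i\le j\le n$ is the only step with any content, and it is elementary.

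Counting these pairs gives the first formula. There are $n$ pairs with $i=j$ and $\binom{n}{2}$ pairs with $i<j$, so
\[
\N(P_n) = n + \binom{n}{2} = \binom{n+1}{2}.
\]

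For the second formula, take the leaf $u = w_1$ without loss of generality, by the symmetry of the path. The connected sets containing $w_1$ are exactly the intervals with $i=1$, namely $\{w_1,\ldots,w_j\}$ for $j=1,\ldots,n$. There are $n$ of them, so $\N(P_n)_u = n$.

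As an alternative, both identities can be checked by induction on $n$. Deleting the leaf $w_n$ gives
\[
\N(P_n) = \N(P_{n-1}) + \N(P_n)_{w_n},
\]
where the connected sets containing $w_n$ are the $n$ intervals ending at $w_n$. This yields $\N(P_n) = \binom{n}{2} + n = \binom{n+1}{2}$.
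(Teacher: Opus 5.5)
Your proof is correct. The characterisation of the connected sets of $P_n=w_1w_2\ldots w_n$ as the intervals $\{w_i,\ldots,w_j\}$ is properly justified: the necessity step rightly uses that the only $w_i$--$w_j$ path in $P_n$ passes through every intermediate vertex. Both counts then follow at once.

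The paper gives no proof of its own here. The lemma is quoted from Sz\'ekely and Wang~\cite{Wang2005} and used as a black box, so there is no in-paper argument to compare against. Your interval argument is the standard self-contained one.

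One small remark on your inductive variant: as written, it still relies on the interval characterisation to count the sets containing $w_n$. You can make it fully independent by combining two tools from the paper:
\begin{itemize}
\item the pendant-vertex recursion $\N(P_n)_{w_n}=1+\N(P_{n-1})_{w_{n-1}}$, as in the first case of Proposition~\ref{Prop:nplus3};
\item Lemma~\ref{Lemm:vtovprime}, which gives $\N(P_n)=\N(P_{n-1})+1+\N(P_{n-1})_{w_{n-1}}$.
\end{itemize}
Together they yield both identities by a joint induction starting from $\N(P_1)=\N(P_1)_{w_1}=1$.
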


\begin{lemma}[\cite{Audacegenral2018,AudaceGirth2018}]\label{cyclev}
For the cycle $C_n$ and $v\in V(C_n)$, we have
\begin{align*}
\N(C_n)_v=1+\binom{n}{2}\,, \quad \text{and} \quad \N(C_n)=n^2-n+1\,.
\end{align*}
\end{lemma}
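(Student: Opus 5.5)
The plan is to count directly. First I will count the connected sets of $C_n=v_0v_1\ldots v_{n-1}$ that contain a fixed vertex $v$, and then obtain the total from this by removing one vertex and applying Lemma~\ref{PnPnu}.

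For the local count I take $v=v_0$. A connected set of $C_n$ containing $v_0$ is either all of $V(C_n)$, or it is a proper subset inducing a connected subgraph. Such a proper subset avoids at least one vertex. Deleting any vertex of $C_n$ leaves a path, and a connected set inside a path is a set of consecutive vertices. So every proper connected set is an arc $v_i v_{i+1}\ldots v_{i+k-1}$ (indices modulo $n$) with $1\le k\le n-1$.

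Each such arc is determined by its starting point and its length, since $k<n$, so there are $n$ arcs of each length $k$. Exactly $k$ of the arcs of length $k$ contain $v_0$. Hence
\begin{align*}
\N(C_n)_v = 1+\sum_{k=1}^{n-1} k = 1+\binom{n}{2}.
\end{align*}

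For the total, I split the connected sets according to whether they contain $v$. Those avoiding $v$ are exactly the connected sets of $C_n-v\cong P_{n-1}$. By Lemma~\ref{PnPnu} there are $\binom{n}{2}$ of these. Therefore
\begin{align*}
\N(C_n)=\N(C_n)_v+\N(C_n-v)=1+\binom{n}{2}+\binom{n}{2}=n^2-n+1.
\end{align*}
As a check, counting all arcs directly gives $n(n-1)$ proper arcs plus the full vertex set, which is the same value.

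The only delicate point is the claim that proper connected subsets are exactly arcs and that distinct (start, length) pairs give distinct sets. Both rely on $k<n$, which is why the full cycle is counted separately. Small cases such as $n=3$, where every subset is connected, fit the formula: $3^2-3+1=7$.
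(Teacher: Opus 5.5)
Your argument is correct and complete for every $n\ge 3$. The paper does not prove this lemma; it is quoted from the author's earlier work, so there is no in-paper proof to compare against. Your direct arc count is a self-contained derivation of $\N(C_n)_v$. For the total you use $\N(C_n)=\N(C_n)_v+\N(C_n-v)$ with $C_n-v\cong P_{n-1}$, which is the same vertex-splitting identity the paper uses in the proofs of Lemma~\ref{Lem:identify} and Lemma~\ref{Lem:FormLn}.

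The one step you flag but do not justify is that distinct pairs $(\text{start},\text{length})$ with length $k<n$ give distinct arcs. This is immediate: the starting vertex of such an arc is the unique vertex of the arc whose cyclic predecessor is not in the arc.
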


\medskip
At several occasions, we employ these two lemmas, without further reference.

A graph $G=(V(G), E(G))$ is said to be \emph{bicyclic} if $|E(G)|=|V(G)|+1$ and $G$ is connected. Every such graph must have at least four vertices. We denote by $\mathbb{B}_n$ the collection of all bicyclic graphs on $n$ vertices. Our goal is to determine both the smallest and the largest numbers of connected sets over $\mathbb{B}_n$, and to also characterise the extremal graphs.

\section{The extremal graph structures}\label{Sect:Extremal}

Bicyclic graphs are tree-like structures in the sense that they can be decomposed as a single `block' with trees attached to the vertices of this block.

\begin{lemma}\label{Gstar}
Let $G$ be a bicyclic graph. Then there is a unique maximal (w.r.t number of vertices) induced subgraph of $G$,  denoted by $G^*$, such that $G^*$ is also bicyclic but contains no pendant vertex.
\end{lemma}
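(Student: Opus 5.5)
We obtain $G^*$ by repeatedly pruning leaves, and show that pruning keeps bicyclicity and that the final result does not depend on the order of pruning. Set $G_0=G$. As long as $G_i$ has a pendant vertex $x_i$, let $G_{i+1}=G_i-x_i$. Deleting a leaf keeps a graph connected and lowers both the number of vertices and the number of edges by exactly one. Hence every $G_i$ is connected and satisfies $|E(G_i)|=|V(G_i)|+1$, so every $G_i$ is bicyclic. Since a bicyclic graph has at least four vertices, the process cannot run forever; it ends with a bicyclic graph $H$ that has no pendant vertex. Moreover $H$ has minimum degree at least $2$: it is connected with at least four vertices, so it has no vertex of degree $0$. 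Finally, $H$ is an induced subgraph of $G$, because each step deletes a vertex together with all edges incident to it.

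Next we show that $H$ contains every other candidate, which gives both maximality and uniqueness. Let $K$ be any induced subgraph of $G$ that is bicyclic and has no pendant vertex. As above, $K$ is connected with at least four vertices, so $\delta(K)\ge 2$. We claim $V(K)\subseteq V(G_i)$ for all $i$, and argue by induction on $i$. Suppose $V(K)\subseteq V(G_i)$. Since $K$ is induced in $G$, it is also induced in $G_i$. If the deleted leaf $x_i$ belonged to $K$, then $\deg_K(x_i)\le \deg_{G_i}(x_i)=1$, which contradicts $\delta(K)\ge 2$. So $x_i\notin V(K)$, and $V(K)\subseteq V(G_{i+1})$.

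It follows that $V(K)\subseteq V(H)$, so $|V(K)|\le |V(H)|$. If equality holds, then $V(K)=V(H)$, and since both graphs are induced subgraphs of $G$ we get $K=H$. Therefore $H$ is the unique maximal graph with the required properties, and we set $G^*=H$. The same containment argument shows that $H$ does not depend on the order in which leaves are removed.

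The only delicate point is the degree-$\ge 2$ observation. It relies on the fact that a connected graph on at least two vertices has no isolated vertex, and that bicyclic graphs have at least four vertices. This observation is exactly what stops a leaf of the current graph $G_i$ from ever belonging to a candidate subgraph $K$. Everything else is routine counting of vertices and edges.
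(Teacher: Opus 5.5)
Your proof is correct and uses the same leaf-pruning construction as the paper: delete a pendant vertex, note that connectivity and $|E|=|V|+1$ are preserved, and stop when no pendant vertex remains. It is more complete than the paper's argument, which only establishes existence and never justifies the uniqueness and maximality asserted in the statement; your observation that any pendant-free bicyclic induced subgraph $K$ has $\delta(K)\ge 2$, and so can never contain the leaf $x_i$ removed at any stage, supplies exactly that, and also shows that $G^*$ does not depend on the order of deletions.
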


\begin{proof}
If $G$ does not contain a pendant vertex, then $G=G^*$. Otherwise, let $u$ be a pendant vertex of $G$ and $H=G-u$. We have $|V(H)|=|V(G)|-1$ and $|E(H)|=|E(G)|-1$. Thus $H$ is also a bicyclic graph. If $H$ does not contain a pendant vertex, then $G^*=H$. Otherwise, $H$ contains a pendant vertex $v$ and $H-v$ is also a bicyclic graph. We can repeatedly delete a pendant vertex until we either obtain a bicyclic graph having no pendant vertex, or a bicyclic graph on four vertices. In the latter case, we get the complete graph on four vertices missing a single edge.
\end{proof}

As a first consequence of Lemma~\ref{Gstar}, we can construct any bicyclic graph $G$ from its subgraph $G^*$ by attaching trees rooted at the vertices of $G^*$. Given a bicyclic graph $G$, its subgraph $G^*$ can only be of the following types:
\begin{enumerate}
\item [{\bf Type~I}:] Two cycles with no common vertex, and thus with a single path joining them; we denote it by $G[p,q,r]$, obtained from the disjoint union of $C_p$ and $C_q$ by adding a path $P_r$ connecting one vertex in $C_p$ and one vertex in $C_q$ (see Figure~\ref{fig:TypeI}).
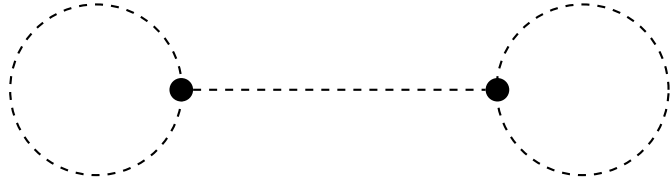
\begin{figure}[h]
	\begin{center}
		\begin{tikzpicture}
		[scale=0.4,inner sep=8mm, 
		vertex/.style={circle,thick,draw}, 
		thickedge/.style={line width=2pt}] 
		\node[vertex] (a1) at (0,0) [fill=white,dashed] {};
		\node[vertex] (a3) at (2.8,0) [inner sep=1mm,fill=black]{};
		\node[vertex] (a2) at (16,0) [fill=white,dashed]{};
		\node[vertex] (a4) at (13.2,0) [inner sep=1mm,fill=black]{};
		\draw[thick,black, dashed] (a3)--(a4);            
		\end{tikzpicture}
	\end{center}
	\caption{Type~I of $G^*$.} \label{fig:TypeI}
\end{figure}
\medskip
\item [{\bf Type~II}:] Two cycles with only one common vertex. In this case, $G^*$ can be constructed from vertex disjoint cycles $C_p$ and $C_q$ by identifying one vertex in $C_p$ and one vertex in $C_q$ (see Figure~\ref{fig:TypeII}).
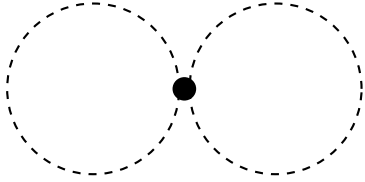
\begin{figure}[h]
	\begin{center}
		\begin{tikzpicture}
		[scale=0.4,inner sep=8mm, 
		vertex/.style={circle,thick,draw}, 
		thickedge/.style={line width=2pt}] 
		\node[vertex] (a1) at (0,0) [fill=white,dashed] {};
		\node[vertex] (a2) at (6,0) [fill=white,dashed] {};   
		\node[vertex] (a3) at (3,0) [inner sep=1mm,fill=black] {};           
		\end{tikzpicture}
	\end{center}
	\caption{Type~II of $G^*$.} \label{fig:TypeII}
\end{figure}
\medskip
\item [{\bf Type~III}:] Two cycles with at least two common vertices. In this case, $G^*$ can be derived from vertex dijsoint paths $P_a$, $P_b$, $P_c$ by first identifying one endvertex of each of these paths, and then identifying the other endvertices of these paths (see Figure~\ref{fig:TypeIII}).
\begin{figure}[h]
	\begin{center}
		\begin{tikzpicture}
		[scale=0.4,inner sep=15.mm, 
		vertex/.style={circle,thick,draw}, 
		thickedge/.style={line width=2pt}] 
		\node[vertex] (a1) at (0,0) [fill=white,dashed] {};  
		\node[vertex] (a2) at (-5.3,0) [inner sep=1mm,fill=black] {};  
		\node[vertex] (a3) at (5.3,0) [inner sep=1mm,fill=black] {}; 
		\draw[thick,black, dashed] (a2)--(a3);      
		\end{tikzpicture}
	\end{center}
	\caption{Type~III of $G^*$.} \label{fig:TypeIII}
\end{figure}
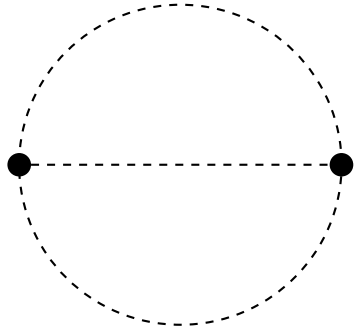
\end{enumerate}

\medskip
We continue with some general lemmas that will simplify the computation of the number of connected sets in certain categories of graphs.

\begin{lemma}\label{Lem:identify}
Let $H$ be obtained from the disjoint union of graphs $H_1$ and $H_2$ by identifying a vertex $u_1$ in $H_1$ and a vertex $u_2$ in $H_2$ (similar to Figure~\ref{fig:TypeII}). We have
\begin{align*}
\N(H)=\N(H_1)+\N(H_2) -1 + (\N(H_1)_{u_1} -1 )(\N(H_2)_{u_2} -1 )\,.
\end{align*}
\end{lemma}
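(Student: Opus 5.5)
Let $u$ denote the vertex of $H$ obtained by identifying $u_1$ and $u_2$. We sort the connected sets $S$ of $H$ according to whether or not they contain $u$.

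\emph{Sets avoiding $u$.} In $H$ there is no edge between $V(H_1)\setminus\{u_1\}$ and $V(H_2)\setminus\{u_2\}$. So a connected set $S$ with $u\notin S$ lies entirely in one of these two sides, and it induces the same subgraph as in $H_1-u_1$ or in $H_2-u_2$. Hence the number of such sets is
\[
(\N(H_1)-\N(H_1)_{u_1})+(\N(H_2)-\N(H_2)_{u_2}).
\]

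\emph{Sets containing $u$.} Write $S_i=(S\cap V(H_i))$, with $u$ read as $u_i$. The key step is to show that $S\mapsto (S_1,S_2)$ is a bijection between the connected sets of $H$ containing $u$ and the pairs $(S_1,S_2)$, where each $S_i$ is a connected set of $H_i$ containing $u_i$.

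Connectivity of $S_1$ follows because $u$ is a cut vertex of $H$ separating the two sides. Take any $x\in S_1$ and a path from $x$ to $u$ inside $H[S]$, and cut it at its first visit to $u$. The portion before that point avoids $u$, so it cannot cross between the sides and therefore stays in $H_1$; it is thus a path in $H_1[S_1]$. Conversely, if both $S_1$ and $S_2$ are connected and share $u$, their union is connected. Every such pair arises from exactly one $S$, namely $S=S_1\cup S_2$ with $u_1,u_2$ glued. This gives $\N(H)_u=\N(H_1)_{u_1}\N(H_2)_{u_2}$.

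\emph{Combining.} Adding the two counts gives
\[
\N(H)=\N(H_1)+\N(H_2)-\N(H_1)_{u_1}-\N(H_2)_{u_2}+\N(H_1)_{u_1}\N(H_2)_{u_2}.
\]
By the identity $ab-a-b=(a-1)(b-1)-1$, this equals $\N(H_1)+\N(H_2)-1+(\N(H_1)_{u_1}-1)(\N(H_2)_{u_2}-1)$, which is the claimed formula.

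The only step needing real care is the path-truncation argument showing that $S_1$ and $S_2$ are connected. The rest is bookkeeping.
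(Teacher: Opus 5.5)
Your proof is correct and follows the paper's argument. Like the paper, you split connected sets by whether they contain the glued vertex $u$, obtain $\N(H-u)=\N(H_1-u_1)+\N(H_2-u_2)$ and $\N(H)_u=\N(H_1)_{u_1}\N(H_2)_{u_2}$, and rearrange; you additionally justify the product identity through the cut-vertex bijection and path-truncation argument, which the paper simply asserts.
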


\begin{proof}
Let $u$ be the only common vertex of $H_1$ and $H_2$ in $H$. We have
\begin{align*}
\N(H-u)=\N(H_1-u_1)+\N(H_2-u_2)=\N(H_1)-\N(H_1)_{u_1} + \N(H_2)-\N(H_2)_{u_2}
\end{align*}
and 
\begin{align*}
\N(H)_u=\N(H_1)_{u_1}\cdot \N(H_2)_{u_2}\,.
\end{align*}
From these two identities, we get
\begin{align*}
\N(H)&=\N(H-u)+\N(H)_u\\
&=\N(H_1)-\N(H_1)_{u_1} + \N(H_2)-\N(H_2)_{u_2}+\N(H_1)_{u_1}\cdot \N(H_2)_{u_2}\\
&=\N(H_1)+\N(H_2) -1 + (\N(H_1)_{u_1} -1 )(\N(H_2)_{u_2} -1 )\,,
\end{align*}
which proves the lemma.
\end{proof}

\medskip

\begin{lemma}\label{Lemm:vtovprime}
Let $H$ be a graph, $v$ a pendant vertex and $v'$ its neighbour in $H$. Then
\begin{align*}
\N(H)=\N(H-v)+1+\N(H-v)_{v'}\,.
\end{align*}
\end{lemma}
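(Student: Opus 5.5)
The plan is to partition the connected sets of $H$ according to whether they contain the pendant vertex $v$, and to count each class separately.

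Sets avoiding $v$ are nonempty subsets of $V(H)\setminus\{v\}$. The key observation is that for any $S\subseteq V(H)\setminus\{v\}$, the subgraph of $H$ induced by $S$ coincides with the subgraph of $H-v$ induced by $S$. Deleting $v$ removes only $v$ and its single edge $vv'$, and that edge has an endvertex outside $S$. Hence the connected sets of $H$ not containing $v$ are exactly the connected sets of $H-v$, and there are $\N(H-v)$ of them.

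Sets containing $v$ split into two kinds. The first is the singleton $\{v\}$, which is connected and contributes the $1$. Any other such set $S$ has $|S|\ge 2$. Because $H[S]$ is connected, $v$ has a neighbour inside $S$, and the only neighbour of $v$ is $v'$, so $v'\in S$. Consider the map $S\mapsto S\setminus\{v\}$.
\begin{itemize}
\item \emph{It lands in the right place.} Removing a vertex of degree one in $H[S]$ keeps the graph connected, so $S\setminus\{v\}$ is a connected set of $H-v$ containing $v'$.
\item \emph{It is surjective.} Conversely, if $T$ is a connected set of $H-v$ with $v'\in T$, then $T\cup\{v\}$ induces a connected graph in $H$, since $v$ attaches to $v'\in T$.
\item \emph{It is injective.} The inverse is $T\mapsto T\cup\{v\}$.
\end{itemize}
So the map is a bijection, and this class contributes $\N(H-v)_{v'}$.

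Adding the three counts gives the identity. The statement is elementary and I expect no real obstacle. The only step that needs care is checking that connectivity is preserved in both directions of the bijection, and that rests solely on $v$ having degree one.
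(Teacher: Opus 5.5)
Your argument is correct and complete.

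The paper takes a shorter route. It views $H$ as the graph obtained from $H_1=vv'$ (a copy of $P_2$) and $H_2=H-v$ by identifying the two copies of $v'$. It then plugs into Lemma~\ref{Lem:identify}, using $\N(P_2)=3$ and $\N(P_2)_{v'}=2$, to get $\N(H)=3+\N(H-v)-1+(2-1)(\N(H-v)_{v'}-1)$.

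The two proofs partition the connected sets at different vertices. Lemma~\ref{Lem:identify} splits them according to whether they contain the cut vertex $v'$, and uses the product formula $\N(H)_{v'}=\N(H_1)_{v'}\cdot\N(H_2)_{v'}$. You split according to whether they contain the leaf $v$, and use the bijection $S\mapsto S\setminus\{v\}$.

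The paper's version costs one line once the general identification formula is available, and it keeps the paper's toolkit uniform. Yours is self-contained and does not depend on Lemma~\ref{Lem:identify}. It also isolates the only fact that matters: $v$ has degree one, so adding or removing it never affects connectivity.
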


\begin{proof}
The graph $H$ decomposes as in Lemma~\ref{Lem:identify}, where $H_1=vv'$ and $H_2=H-v$. Thus 
\begin{align*}
\N(H)=3 +\N(H-v) -1 + (2 -1 )(\N(H-v)_{v'} -1 )=\N(H-v)+1+\N(H-v)_{v'}\,.
\end{align*}
\end{proof}

\medskip
For an integer $m\geq 4$, we define the {\it tadpole} graph $D_m$ to be the unicyclic graph obtained by identifying one vertex of the cycle $C_3$ with one leaf of the path $P_{m-2}$. Furthermore, we define $L_n$ to be the graph $L_n:=G[3,3,n-4]$; see Figure~\ref{fig:TypeI}.

\begin{lemma}\label{Lem:FormLn}
We have 
\begin{align*}
\N(L_n)=\frac{(n+6)(n-1)}{2}
\end{align*}
for all $n\geq 5$.
\end{lemma}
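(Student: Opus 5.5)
The plan is to obtain $L_n$ by gluing the basic pieces of Lemma~\ref{PnPnu} and Lemma~\ref{cyclev} at single vertices, and to evaluate each gluing with Lemma~\ref{Lem:identify}. First I fix the vertex count. Reading $G[3,3,n-4]$ as two triangles whose chosen vertices are joined by a path $P_{n-4}$, with the endvertices of $P_{n-4}$ identified with those triangle vertices, the order is $3+3+(n-4)-2=n$. For $n=5$ the path is a single vertex and $L_5$ is the bowtie, for which Lemma~\ref{Lem:identify} gives $7+7-1+3\cdot 3=22=\frac{11\cdot 4}{2}$. So I treat $n=5$ directly and assume $n\ge 6$ from here on.

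\textbf{Step 1: the tadpole $D_m$.} Let $c$ be the triangle vertex of $D_m$ on which the path $P_{m-2}$ is attached, and let $w$ be the other leaf of the path. By Lemma~\ref{Lem:identify} with $H_1=C_3$ and $H_2=P_{m-2}$,
\begin{align*}
\N(D_m)=7+\binom{m-1}{2}-1+(4-1)(m-2-1)=6+\binom{m-1}{2}+3(m-3)\,.
\end{align*}

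\textbf{Step 2: connected sets of $D_m$ through $w$.} I count $\N(D_m)_w$ directly by splitting on whether the set reaches $c$.
\begin{itemize}
\item If the set avoids $c$, it is a subpath of $P_{m-2}$ starting at $w$ and stopping before $c$; there are $m-3$ of these.
\item If the set contains $c$, it contains the whole $w$--$c$ path, and its trace on the triangle is a connected set of $C_3$ containing $c$; there are $\N(C_3)_c=4$ of these.
\end{itemize}
Hence $\N(D_m)_w=m+1$.

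\textbf{Step 3: assemble $L_n$.} The graph $L_n$ is obtained by identifying $w$ in $D_{n-2}$ with a vertex of a second triangle; this has order $(n-2)+3-1=n$, as required. Lemma~\ref{Lem:identify} then gives
\begin{align*}
\N(L_n)=\N(D_{n-2})+7-1+(n-1)\cdot 3=6+\binom{n-3}{2}+3(n-5)+3n+3\,.
\end{align*}
This simplifies to $\frac{n^2+5n-6}{2}=\frac{(n+6)(n-1)}{2}$.

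\textbf{Main obstacle.} There is no real difficulty; the care needed is bookkeeping. One must interpret the path length in $G[p,q,r]$ consistently, which the check $n=5$ confirms. One must also respect the condition $m\ge 4$ for tadpoles: at $n=6$ the piece $D_4$ is still a valid tadpole, so only $n=5$ needs the separate treatment above.
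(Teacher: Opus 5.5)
Your strategy works and differs only mildly from the paper's. The paper deletes the junction vertex $v$ of one triangle and the path, uses $L_n-v=P_2\cup D_{n-3}$ with the formula $\N(D_m)=\frac{(m-1)(m+4)}{2}$ imported from the earlier unicyclic paper, and computes $\N(L_n)_v=\N(C_3)_v\cdot\N(D_{n-2})_w=4(n-1)$. You instead derive $\N(D_m)$ yourself from Lemma~\ref{Lem:identify} and then glue a second triangle at $w$, which makes the argument self-contained. Your $n=5$ check, your Step~1 (which agrees with $\frac{(m-1)(m+4)}{2}$) and your Step~2 are correct.

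Step~3, however, is wrong as written. Lemma~\ref{Lem:identify} needs the factor $\bigl(\N(D_{n-2})_w-1\bigr)\bigl(\N(C_3)_v-1\bigr)$. By your Step~2 this equals $(n-2)\cdot 3$, not $(n-1)\cdot 3$; you dropped the $-1$ that you correctly included in Step~1. As a result, your displayed expression $6+\binom{n-3}{2}+3(n-5)+3n+3$ equals $\frac{n^2+5n}{2}$, not $\frac{n^2+5n-6}{2}$. At $n=6$ it gives $33$, whereas $\N(L_6)=\N(D_4)+7-1+4\cdot 3=12+6+12=30$. The correct target value appears only because your final simplification is also wrong. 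With the right factor the computation reads
\[
\N(L_n)=6+\binom{n-3}{2}+3(n-5)+6+3(n-2)=\frac{(n^2-7n+12)+(12n-18)}{2}=\frac{(n+6)(n-1)}{2}\,.
\]
So the repair is trivial, but as submitted the proof asserts a false equality.
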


\begin{proof}
It is easy to check that $\N(L_5)=22$. Let $n>5$, $v$ be the common vertex of a fixed $C_3$ and $P_{n-4}$ in $L_n$. The graph $L_n -v$ has $D_{n-3}$ as a connected component. It is proved in~\cite{Audacegenral2018} that 
\begin{align*}
\N(D_m)=\frac{(m-1)(m+4)}{2}
\end{align*}
for all $m\geq 3$. Thus
\begin{align*}
\N(L_n-v)=\N(P_2)+\N(D_{n-3})=3+ \frac{(n-4)(n+1)}{2}\,.
\end{align*}
Moreover, 
\begin{align*}
\N(L_n)_v=\N(C_3)_v \cdot \N(D_{n-2})_w\,,
\end{align*}
where $w$ is the unique pendant vertex of $D_{n-2}$. Since
\begin{align*}
\N(D_{n-2})_w=n-1\,, \quad \N(C_3)_v=4\,, 
\end{align*}
we deduce that $\N(L_n)_v=4(n-1)$ and that
\begin{align*}
\N(L_n)&=\N(L_n-v)+\N(L_n)_v\\
&=3+ \frac{(n-4)(n+1)}{2}+4(n-1)=\frac{(n+6)(n-1)}{2}\,,
\end{align*}
as needed.
\end{proof}

\medskip
Denote by $A_4$ the unique bicyclic graph on four vertices. For $n\geq 5$, we identify an endvertex of a $P_{n-3}$ with one vertex of degree two in $A_4$ to obtain $A_n \in \mathbb{B}_n$.

\begin{lemma}
We have
\begin{align*}
\N(A_n)=\frac{n^2+7n-16}{2}
\end{align*}
for all $n\geq 4$. In particular,
\begin{align*}
\N(L_5)=\N(A_5)=22\,, \quad \text{and} \quad \N(L_n)< \N(A_n) \quad \text{holds for all}~~~ n > 5\,.
\end{align*}

\end{lemma}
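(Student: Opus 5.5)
We compute $\N(A_n)$ by induction on $n$, using Lemma~\ref{Lemm:vtovprime} to peel off the far end of the pendant path.

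\textbf{Base case.} For $n=4$ we count directly in $A_4$, the graph $K_4$ minus an edge. Write its vertices as $x,y$ (degree three, adjacent) and $a,b$ (degree two, nonadjacent). There are $4$ singletons and $5$ edges. Every $3$-subset induces a connected graph, which gives $4$ more sets. The whole vertex set is connected, giving $1$. So $\N(A_4)=14$, which agrees with $(16+28-16)/2=14$.

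\textbf{Inductive step.} For $n\ge 5$, let $v$ be the pendant vertex of $A_n$ at the end of the attached path, and let $v'$ be its neighbour. Then $A_n-v=A_{n-1}$, where $A_3$-type degeneracy is avoided by reading $A_4$ when $n=5$. Lemma~\ref{Lemm:vtovprime} gives
\begin{align*}
\N(A_n)=\N(A_{n-1})+1+\N(A_{n-1})_{v'}\,,
\end{align*}
where $v'$ is the pendant end of $A_{n-1}$ for $n\ge 6$, or the chosen degree-two vertex $a$ of $A_4$ for $n=5$.

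\textbf{The quantity $\N(A_m)_{w}$.} Let $w$ be the pendant end of the path in $A_m$. We first compute $\N(A_4)_a$. Besides $\{a\}$, the sets containing $a$ are $\{a,x\}$, $\{a,y\}$, $\{a,x,y\}$, $\{a,x,b\}$, $\{a,y,b\}$ and $V(A_4)$; the set $\{a,b\}$ is disconnected. Hence $\N(A_4)_a=7$.

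A connected set containing $w$ is either a subpath of the pendant path starting at $w$ and not reaching $A_4$, or it contains the whole path together with a connected set of $A_4$ containing $a$. This gives
\begin{align*}
\N(A_m)_w=(m-4)+7=m+3 \quad (m\ge 5)\,.
\end{align*}
The same formula holds for $m=4$ with $w=a$, since $4+3=7$.

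\textbf{Closing the induction.} Substituting, $\N(A_n)=\N(A_{n-1})+n+3$. Using $\N(A_{n-1})=\frac{(n-1)^2+7(n-1)-16}{2}$, we get
\begin{align*}
\frac{(n-1)^2+7(n-1)-16}{2}+n+3=\frac{n^2+7n-16}{2}\,,
\end{align*}
which is a routine check: the difference of the two fractions is $\frac{2n-1+7}{2}=n+3$. This completes the induction.

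\textbf{The comparison with $L_n$.} At $n=5$ the formula gives $\N(A_5)=(25+35-16)/2=22$, which equals $\N(L_5)=22$ from Lemma~\ref{Lem:FormLn}. For general $n\ge 5$,
\begin{align*}
2\N(A_n)-2\N(L_n)=(n^2+7n-16)-(n^2+5n-6)=2n-10\,.
\end{align*}
This is positive for $n>5$, so $\N(L_n)<\N(A_n)$ for all $n>5$.

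The main point needing care is the case analysis for $\N(A_m)_w$, in particular noting that the disconnected set $\{a,b\}$ is excluded when computing $\N(A_4)_a$; everything else is routine algebra.
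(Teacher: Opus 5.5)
Your proof is correct, but it is organised differently from the paper's. The paper does not induct. It views $A_n$ as $P_{n-3}$ glued at an endvertex to a degree-two vertex of $A_4$ and applies Lemma~\ref{Lem:identify} once. The known values $\N(P_{n-3})=\binom{n-2}{2}$ and $\N(P_{n-3})_{u_1}=n-3$, together with $\N(A_4)=14$ and $\N(A_4)_{u_2}=7$, then give the closed form immediately.

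You instead peel off one pendant vertex at a time with Lemma~\ref{Lemm:vtovprime}, which is itself the special case $H_1=P_2$ of Lemma~\ref{Lem:identify}. This forces you to compute the auxiliary quantity $\N(A_m)_w=m+3$ and then telescope the recurrence $\N(A_n)=\N(A_{n-1})+n+3$. Both routes rest on the same two base counts in $A_4$; you verify them explicitly, whereas the paper only says they are easy to check.

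The paper's version is shorter. Yours costs an extra counting step but has a small by-product: $\N(A_m)_w=m+3$ shows that the pendant end of $A_m$ attains equality in Proposition~\ref{Prop:nplus3}, so that bound is sharp for every $n\ge 4$. Your comparison with $L_n$ is identical to the paper's.
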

\begin{proof}
It is easy to check that $\N(A_4)=14$. For $n\geq 5$, denote by $u_1$ in $P_{n-3}$ and by $u_2$ in $A_4$ the two vertices that are identified to obtain $A_n$. We employ Lemma~\ref{Lem:identify} to get
\begin{align*}
\N(A_n)&=\N(P_{n-3})+\N(A_4) -1 + (\N(P_{n-3})_{u_1} -1 )(\N(A_4)_{u_2} -1 )\\
&=\binom{n-2}{2}+14-1+(n-3-1)(7-1)\\
&=\frac{(n-2)(n-3)}{2}+13+6(n-4)=\frac{n^2+7n-16}{2}\,.
\end{align*}
Now Lemma~\ref{Lem:FormLn} gives us
\begin{align*}
\N(A_n)-\N(L_n)=  \frac{n^2+7n-16}{2} -  \frac{(n+6)(n-1)}{2} = n-5 \geq 0
\end{align*}
for all $n\geq 5$, which proves the lemma.
\end{proof} 

\medskip
As a next step, we shall estimate the smallest number of connected sets containing a given vertex in a $n$-vertex bicyclic graph.

\begin{proposition}\label{Prop:nplus3}
If $G\in \mathbb{B}_n$ and $v\in V(G)$, then 
\begin{align*}
\N(G)_v \geq n+3\,.
\end{align*}
\end{proposition}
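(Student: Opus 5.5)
Fix $G\in\mathbb{B}_n$ and $v\in V(G)$. The plan is to count connected sets containing $v$ via paths, and to exhibit $n+3$ distinct ones explicitly.

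\textbf{Step 1: a base family of $n$ sets.} Take a spanning tree $T$ of $G$. For each vertex $w$, let $S_w$ be the vertex set of the $v$--$w$ path in $T$. Each $S_w$ is connected in $G$, because the path's edges are edges of $G$. The sets $S_w$ are pairwise distinct, since $w$ is the unique vertex of $S_w$ farthest from $v$ in $T$. This gives $n$ connected sets containing $v$.

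\textbf{Step 2: three extra sets.} Since $G$ is bicyclic, $E(G)\setminus E(T)$ consists of two edges $e_1=x_1y_1$ and $e_2=x_2y_2$. For $i=1,2$, the set $U_i=S_{x_i}\cup S_{y_i}$ is connected and contains $v$. It is not of the form $S_w$, because in $T$ it is a subtree with two ``ends'' (neither $x_i$ nor $y_i$ is an ancestor of the other, as $e_i\notin E(T)$) unless it is a single path. A cleaner route is to choose $T$ as a breadth-first search tree from $v$. Then each non-tree edge joins vertices whose depths differ by at most $1$, so neither endpoint lies on the tree path to the other. Hence $U_i$ is a branching subtree and is not a path from $v$.

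The third set is $U_3=U_1\cup U_2$, which is connected. It remains to check that $U_1$, $U_2$, $U_3$ are distinct.

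\textbf{Main obstacle: distinctness.} The sets $U_1$ and $U_2$ could coincide, for instance if $e_1,e_2$ have the same endpoint set, which is impossible since $G$ is simple. They could also coincide if, say, $\{x_1,y_1\}\neq\{x_2,y_2\}$ but both give the same union of paths. In that case the maximal (leaf) vertices of the subtree $U_1$ in $T$ are exactly $\{x_1,y_1\}$, because depths differ by at most $1$ and neither endpoint is an ancestor of the other. So $U_1=U_2$ forces $\{x_1,y_1\}=\{x_2,y_2\}$, a contradiction.

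If $U_3$ equals $U_1$ or $U_2$, say $U_2\subseteq U_1$, I would instead use an alternative set. The candidates are:
\begin{itemize}
\item[(a)] $S_{x_1}\cup S_{y_1}$ together with one extra tree path;
\item[(b)] a set of the form $S_{x_1}\cup\{y_1\}$, which is connected via $e_1$ and may differ from the $S_w$.
\end{itemize}
Option (b) works when $y_1$'s parent is not on $S_{x_1}$.

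If the BFS bookkeeping becomes messy, the fallback is induction on $n$ using Lemma~\ref{Gstar}.
\begin{itemize}
\item \emph{Inductive step.} If $G$ has a pendant vertex $u\ne v$, delete it. Then $\N(G)_v\ge \N(G-u)_v+1$, since $S\cup\{u\}$ is new for the set $S$ given by the $v$--$u'$ path, where $u'$ is the neighbour of $u$. If $v$ itself is pendant with neighbour $v'$, then $\N(G)_v\ge 1+\N(G-v)_{v'}\ge 1+(n-1)+3$.
\item \emph{Base case.} This reduces the claim to pendant-free $G=G^*$ of Types I--III.
\item \emph{Type II.} For the figure-eight $G^*$, Lemma~\ref{Lem:identify}-style product counts give large values directly.
\item \emph{Types I and III.} Here I would count, for $v$ on a cycle, the arcs through $v$ on each cycle: a cycle $C_p$ alone gives $1+\binom p2\ge p+1$. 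I would then add the sets reaching the other cycle to get at least $n+3$.
\end{itemize}
This base-case verification is the step I expect to be the most tedious but routine.
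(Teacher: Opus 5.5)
\textbf{There is a gap: your argument stops at the third set.} Your BFS route is sound up to that point. The $n$ sets $S_w$ and the two sets $U_1,U_2$ are correctly shown to be connected and pairwise distinct. However, $U_1\cup U_2$ is new only if neither $U_i$ contains the other, and containment does occur. In $A_4$, take $v$ of degree two, let $a,b$ be its neighbours, and let the fourth vertex $d$ be a BFS child of $a$. The non-tree edges are $ab$ and $bd$, and $U_1=\{v,a,b\}\subsetneq U_2=V(A_4)$.

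For this case you only list candidates. Option (a) fails whenever the larger $U_i$ is already $V(G)$, as here. For option (b) you state when $S_{x_1}\cup\{y_1\}$ differs from $U_1$, but you never show that this holds in at least one orientation, nor that the resulting set also differs from $U_2$. So, as written, only $n+2$ sets are guaranteed.

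The fallback does not rescue this. Its inductive step is the easy part, and it coincides with the paper's. The pendant-free base case, where all the content lies, is only sketched. Counting arcs through $v$ on each cycle does not even apply when $v$ is an inner vertex of the connecting path of a Type~I graph.

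\textbf{How to complete the BFS argument.} Suppose the parent of $y_1$ is not on $S_{x_1}$. Then $W=S_{x_1}\cup\{y_1\}$ is connected (via $e_1$), contains $v$, and is not closed under taking $T$-ancestors. Every $S_w$ and both $U_1,U_2$ are ancestor-closed, so $W$ is a new set.

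Suppose instead this fails in both orientations: the parent of $y_1$ lies on $S_{x_1}$ and the parent of $x_1$ lies on $S_{y_1}$. Since $x_1y_1\notin E(T)$, each parent is a proper ancestor of the other endpoint. This forces equal depths and a common parent $p$, so $U_1=S_p\cup\{x_1,y_1\}$, whose only $T$-incomparable pair is $\{x_1,y_1\}$.

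Hence either some non-tree edge yields a new set of type (b), or both non-tree edges join siblings. In the latter case, since $x_2,y_2$ are $T$-incomparable, $U_2\subseteq U_1$ would force $\{x_2,y_2\}=\{x_1,y_1\}$, and symmetrically for $U_1\subseteq U_2$; so $U_1\cup U_2$ is new.

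\textbf{Comparison with the paper.} Completed this way, your proof is direct and non-inductive. It covers all $n\ge 4$ at once and uses nothing about $G$ beyond connectivity and two non-tree edges. The paper instead inducts by deleting pendant vertices. In the pendant-free case with $n\ge 5$, it counts the vertex sets of $n$ shortest paths from $v$; these are induced, so $v$ is an end of each. It adds at least three further connected sets in which $v$ has degree at least two, for instance $\{v,v_1,v_2\}$, $\{v,v_1,v_2,w\}$ and $V(G)$ when $\deg v=2$. This avoids both the type classification and any tree bookkeeping.
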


\begin{proof}
It is clear that $\N(A_4)_v=7$ if $v$ is of degree two, and  $\N(A_4)_v=8$ if $v$ is of degree three. So the proposition holds (with equality) for $n=4$. Henceforth, assume $n\geq 5$. We distinguish between three scenarios:
\begin{itemize}
\item $v$ is a pendant vertex of $G$, and $v'$ is the neighbour of $v$.\\
Since $G-v$ is also a bicyclic graph, we have
\begin{align*}
\N(G)_v=1+\N(G-v)_{v'} \geq 1+ (n-1+3)=n+3\,,
\end{align*}
where the inequality follows by induction on $n$.
\item $G$ contains a pendant vertex $u \neq v$.\\
Since $G-u$ is also a bicyclic graph, we have
\begin{align*}
\N(G)_v=\N(G-u)_v + \N(G)_{u,v} \geq (n-1+3)  + \N(G)_{u,v} \,,
\end{align*}
where the inequality follows by induction on $n$. The count for $\N(G)_{u,v}$ includes a shortest path between $u$ and $v$, thus $\N(G)_{u,v} \geq 1$. In particular,
\begin{align*}
\N(G)_v \geq  (n-1+3)  + 1 = n+3\,.
\end{align*}
\item $G$ does not have a pendant vertex.\\
All shortest paths starting at $v$ in $G$ contribute to the count $\N(G)_v$. Thus
$$\N(G)_v \geq n\,.$$
Vertex $v$ together with any two of its neighbours also induces a connected graph in which $v$ is not pendant. We deduce the following:
\begin{itemize}
\item If $v$ has at least three neighbours in $G$, then $\N(G)_v \geq n +3 +1$, where the final $1$ accounts for $G$ itself.
\item If $v$ has only two neighbours, say $v_1,v_2$ in $G$, then either $v_1$ or $v_2$ must have a neighbour, say $w$ not belonging to $\{v,v_1,v_2\}$. Thus, each of the graphs induced by $\{v,v_1,v_2\}$ and $\{v,v_1,v_2,w\}$ is connected and none of them has $v$ as a pendant vertex. Hence  $\N(G)_v \geq n +2 +1$, where the final $1$ accounts for the whole $G$.
\end{itemize}
\end{itemize}
This completes the proof of the proposition.
\end{proof}

\medskip
We list in Figure~\ref{fig:All5} all elements in the collection $\mathbb{B}_5$, alongside their numbers of connected sets (the details are omitted).
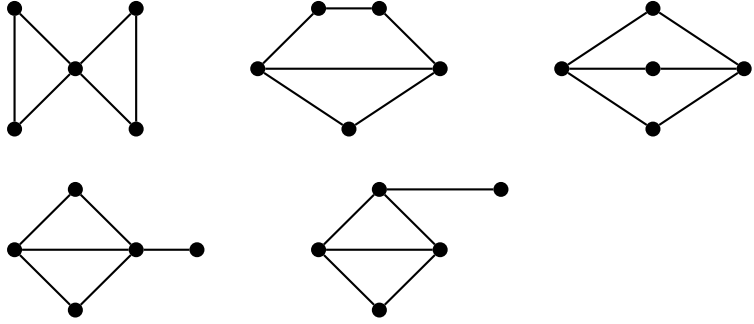
\begin{figure}[h]
	\begin{center}
		\begin{tikzpicture}
		[scale=0.4,inner sep=0.6mm, 
		vertex/.style={circle,thick,draw}, 
		thickedge/.style={line width=2pt}] 
		\node[vertex] (a1) at (0,2) [fill=black] {};  
		\node[vertex] (a2) at (0,-2) [fill=black] {};  
		\node[vertex] (a3) at (2,0) [fill=black] {}; 
		\node[vertex] (a4) at (4,2) [fill=black] {};  
		\node[vertex] (a5) at (4,-2) [fill=black] {}; 
		\draw[thick,black] (a1)--(a2) (a1)--(a3) (a2)--(a3) (a3)--(a4) (a3)--(a5) (a4)--(a5);      
		\node[vertex] (b1) at (8,0) [fill=black] {};  
		\node[vertex] (b2) at (10,2) [fill=black] {};  
		\node[vertex] (b3) at (11,-2) [fill=black] {}; 
		\node[vertex] (b4) at (12,2) [fill=black] {};  
		\node[vertex] (b5) at (14,0) [fill=black] {}; 
		\draw[thick,black] (b1)--(b2) (b1)--(b3) (b1)--(b5) (b2)--(b4) (b3)--(b5) (b4)--(b5);      
		\node[vertex] (c1) at (18,0) [fill=black] {};  
		\node[vertex] (c2) at (21,2) [fill=black] {};  
		\node[vertex] (c3) at (21,-2) [fill=black] {}; 
		\node[vertex] (c4) at (21,0) [fill=black] {};  
		\node[vertex] (c5) at (24,0) [fill=black] {}; 
		\draw[thick,black] (c1)--(c2) (c1)--(c3) (c1)--(c4) (c2)--(c5) (c3)--(c5) (c4)--(c5);
		\node[vertex] (d1) at (0,-6) [fill=black] {};  
		\node[vertex] (d2) at (2,-4) [fill=black] {};  
		\node[vertex] (d3) at (2,-8) [fill=black] {}; 
		\node[vertex] (d4) at (4,-6) [fill=black] {};  
		\node[vertex] (d5) at (6,-6) [fill=black] {}; 
		\draw[thick,black] (d1)--(d2) (d1)--(d3) (d1)--(d4) (d2)--(d4) (d3)--(d4) (d4)--(d5);
		\node[vertex] (e1) at (10,-6) [fill=black] {};  
		\node[vertex] (e2) at (12,-4) [fill=black] {};  
		\node[vertex] (e3) at (12,-8) [fill=black] {}; 
		\node[vertex] (e4) at (14,-6) [fill=black] {};  
		\node[vertex] (e5) at (16,-4) [fill=black] {}; 
	\draw[thick,black] (e1)--(e2) (e1)--(e3) (e1)--(e4) (e2)--(e4) (e3)--(e4) (e2)--(e5);      
		\end{tikzpicture}
	\end{center}
	\caption{All bicyclic graphs on five vertices. Their numbers of connected sets are $22,~24,~26,~23,~22$, respectively.} \label{fig:All5}
\end{figure}

\medskip
We are now ready to state and prove our first theorem.

\begin{theorem}
Let $n>4$ and $G\in \mathbb{B}_n$. It holds that
$$\N(G) \geq \N(L_n)=\frac{(n+6)(n-1)}{2}
\,.$$ Equality happens in the case where $n>5$ if and only if $G$ is isomorphic to $L_n$. 
\end{theorem}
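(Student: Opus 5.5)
We argue by induction on $n$. The base case $n=5$ is read off Figure~\ref{fig:All5}: the minimum there is $22=\N(L_5)$, attained by $L_5$ and by $A_5$ (this is why uniqueness is claimed only for $n>5$). For the inductive step, fix $n\geq 6$ and $G\in\mathbb{B}_n$, and split according to whether $G$ has a pendant vertex.

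\emph{Case 1: $G$ has a pendant vertex $v$ with neighbour $v'$.} By Lemma~\ref{Lemm:vtovprime},
\begin{align*}
\N(G)=\N(G-v)+1+\N(G-v)_{v'}\,.
\end{align*}
Since $G-v\in\mathbb{B}_{n-1}$, the induction hypothesis gives $\N(G-v)\geq \frac{(n+5)(n-2)}{2}$, and Proposition~\ref{Prop:nplus3} gives $\N(G-v)_{v'}\geq n+2$. Hence
\begin{align*}
\N(G)\geq \frac{(n+5)(n-2)}{2}+n+3=\frac{(n+6)(n-1)}{2}\,,
\end{align*}
which is the claimed bound.

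For equality, both inequalities must be tight. If $n-1>5$, tightness of the first forces $G-v\cong L_{n-1}$. Computing $\N(L_{n-1})_{v'}$ for each vertex $v'$ of $L_{n-1}$ by Lemma~\ref{Lem:identify}, the value $n+2$ should be attained only at the two degree-two vertices of a triangle. Attaching a pendant edge at such a vertex does not give $L_n$, so one must check that this contributes one extra connected set and the case yields no further equality. Here I expect the count to come out as $n+3$ at the triangle vertices, giving strict inequality, and exactly at the path ends to reproduce $L_n$. In any event the equality analysis reduces to computing $\N(L_{n-1})_{w}$ for each $w$ and seeing which attachments give $L_n$. If $n-1=5$, then $G-v\in\{L_5,A_5\}$, and the finitely many extensions are checked directly; attachments to $A_5$ are handled by the inequality $\N(A_n)>\N(L_n)$ and similar direct counts.

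\emph{Case 2: $G$ has no pendant vertex}, so $G=G^*$ is of Type~I, II or III. Here I would bound $\N(G)$ from below directly. Every connected subgraph of a spanning tree is a connected set, and in a pendant-free bicyclic graph one can exhibit many more: all arcs of each cycle, each cycle itself, and so on. The crude estimate is that a cycle $C_p$ alone already gives $p^2-p+1$ sets.

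\begin{itemize}
\item \emph{Type~II:} Lemma~\ref{Lem:identify} with Lemma~\ref{cyclev}, where $p+q=n+1$, gives
\begin{align*}
\N(G)=p^2-p+1+q^2-q+1-1+\tbinom{p}{2}\tbinom{q}{2}\,,
\end{align*}
which exceeds $\frac{(n+6)(n-1)}{2}$ by an elementary computation.
\item \emph{Type~I:} first apply Lemma~\ref{Lem:identify} at the two endpoints of the connecting path, then compare the result with $L_n=G[3,3,n-4]$. The comparison is monotone, in that enlarging the cycles at the expense of the path increases the count.
\item \emph{Type~III:} delete a degree-three vertex and count the connected sets containing it, or simply note that the graph contains a cycle of length at least $\frac{2n}{3}$ plus chords. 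This gives a quadratic lower bound with leading coefficient larger than $\frac12$; small $n$ are checked by hand.
\end{itemize}

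The main obstacle is Case~2, especially Type~III. There I would prove $\N(\theta(a,b,c))\geq \N$ of the largest cycle $\geq \ell^2-\ell+1$, with $\ell\geq \frac{2(n+1)}{3}$, which beats $\frac{(n+6)(n-1)}{2}$ for $n\geq 6$ apart from tiny cases. The equality bookkeeping in Case~1 is the other delicate point.
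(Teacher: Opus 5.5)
Your skeleton (induction, the pendant case via Proposition~\ref{Prop:nplus3}, then the three pendant-free types) is the paper's, and your direct Type~II computation is fine. The Type~III step, however, fails as proposed.

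If the branch paths have $a\le b\le c$ vertices, the longest cycle has $\ell=b+c-2=n+2-a$ vertices, and $a$ can be as large as $(n+4)/3$. So $\ell^2-\ell+1$ is only about $\frac{4}{9}n^2$: the leading coefficient is $\frac{4}{9}<\frac{1}{2}$, not larger. The bound therefore loses to $\frac{(n+6)(n-1)}{2}$ for nearly balanced theta graphs at every $n\ge 6$, not just in tiny cases. For instance, $a=b=c=10$ gives $n=26$, $\ell=18$ and $307<400=\N(L_{26})$. Already $(a,b,c)=(3,3,4)$ gives $\ell=5$ and $21<30=\N(L_6)$.

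The paper instead cites that $C_n$ uniquely minimises $\N$ among connected $n$-vertex graphs without a cut vertex. This gives $\N(G)>n^2-n+1>\frac{(n+6)(n-1)}{2}$ for $n\ge 6$. If you want a self-contained argument, sort the connected sets by which of the two branch vertices they contain. With $\lambda_i$ internal vertices on the $i$-th branch path and $A_i=\binom{\lambda_i+1}{2}$, one gets exactly
\[
\N(G)=\sum_i A_i+2\prod_i(\lambda_i+1)+\prod_i(A_i+1)-\prod_i A_i=n^2-n+1+\sum_{i<j}A_iA_j+2\lambda_1\lambda_2\lambda_3 ,
\]
which exceeds $n^2-n+1$ because at most one $\lambda_i$ is zero.

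Your Type~I justification is also false as stated. Lemma~\ref{Lem:identify} gives $\N(G[p,q,r])=(r+1)\bigl(\binom{p}{2}+\binom{q}{2}\bigr)+\binom{r+1}{2}+\binom{p}{2}\binom{q}{2}$. For instance, $\N(G[12,3,1])=337<342=\N(G[11,3,2])$ on $14$ vertices, so enlarging a cycle at the expense of the path can lower the count.

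The inequality you need is nevertheless true, via the paper's move: replace a whole cycle $C_q$, $q\ge 4$, by the tadpole $D_q$ glued at its pendant vertex $u_1$. Lemma~\ref{Lem:identify} shows this strictly decreases $\N$, since $\N(D_q)<\N(C_q)$ and $\N(D_q)_{u_1}=q+1<1+\binom{q}{2}$. At most two such moves turn any Type~I or Type~II graph into $L_n$, which also settles uniqueness.

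Finally, your Case~1 arithmetic is off by one: $\frac{(n+5)(n-2)}{2}+n+3=\frac{(n+6)(n-1)}{2}+1$. So the pendant case is strict, exactly as in the paper, and the equality discussion there is unnecessary. Its guess that some pendant attachment to $L_{n-1}$ reproduces $L_n$ is impossible anyway, because $L_n$ has no pendant vertex.
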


\begin{proof}
The formula for $\N(L_n)$ is given in Lemma~\ref{Lem:FormLn}. If $n=5$, then $\N(G) \geq 22$ by Figure~\ref{fig:All5}. Equality holds if and only if $G=L_5$ or $G=A_5$. Let $n>5$, and assume that $G$ has a pendant vertex, say $v$. Then $G-v \in \mathbb{B}_{n-1}$ and the induction hypothesis on $n$ yields
\begin{align*}
\N(G-v) \geq \frac{(n-1+6)(n-1-1)}{2}\,.
\end{align*}
This implies that
\begin{align*}
\N(G)=\N(G-v)+\N(G)_v \geq \frac{(n+5)(n-2)}{2}+ (n+3)\,,
\end{align*}
now using Proposition~\ref{Prop:nplus3}. Hence
\begin{align*}
\N(G)\geq \frac{(n+5)(n-2)}{2}+ (n+3) > \frac{(n+5)(n-2)}{2}+ (n+2)=\frac{(n+6)(n-1)}{2}\,,
\end{align*}
and we are done in this case. If $G$ does not have a pendant vertex, then $G=G^*$ is one of the three types in Figures~\ref{fig:TypeI},~\ref{fig:TypeII},~\ref{fig:TypeIII}. Henceforth, we assume that $G\in \mathbb{B}_n$ has the smallest number of connected sets. 

{\sc Claim~1}: $G$ cannot be of type II.\\
Suppose not. Replace the cycle $C_q$ ($q\geq p$) in $G=G^*=G[p,q,n+2-p-q]$ with the tadpole graph $D_q$, as shown in Figure~\ref{fig:FromTypeII}, to obtain a graph $G'\in \mathbb{B}_n$. By $q\geq p$, we necessarily have $q\geq 4$.
\begin{figure}[h]
	\begin{center}
		\begin{tikzpicture}
		[scale=0.4,inner sep=8mm, 
		vertex/.style={circle,thick,draw}, 
		thickedge/.style={line width=2pt}] 
		\node[vertex] (a1) at (0,0) [fill=white,dashed] {};
		\node[vertex] (b1) at (10,0) [inner sep=1mm,fill=black] {};  
		\node[vertex] (b2) at (12,2) [inner sep=1mm,fill=black] {};   
		\node[vertex] (b3) at (12,-2) [inner sep=1mm,fill=black] {};   
		\node[vertex] (a3) at (3,0) [inner sep=1mm,fill=black] {};     
	\draw[thick,black, dashed] (a3)--(b1) ;   
	\draw[thick,black] (b1)--(b2) (b1)--(b3) (b2)--(b3);      
		\end{tikzpicture}
	\end{center}
	\caption{From Type~II to a Type~I.} \label{fig:FromTypeII}
\end{figure}
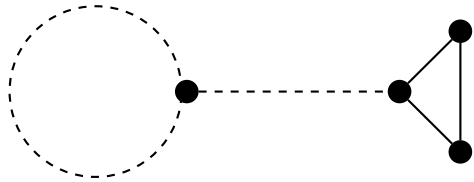
It follows from Lemma~\ref{Lem:identify} that
\begin{align*}
\N(G')-\N(G)=\N(D_q)-\N(C_q) + (\N(D_q)_{u_1} - \N(C_q)_{u_2} )(\N(C_p)_{u_2} -1 )\,,
\end{align*}
where $u_1$ is the unique pendant vertex in $D_q$. It is shown in~\cite{Audacegenral2018} that $D_m$ uniquely minimises the number of connected sets over the set of all $m$-vertex unicyclic graphs. Therefore,
\begin{align*}
\N(G')-\N(G) < (\N(D_q)_{u_1} - \N(C_q)_{u_2} )(\N(C_p)_{u_2} -1 )\,.
\end{align*}
Since $\N(D_q)_{u_1}=q+1$, while $\N(C_q)_{u_2}=1+\binom{q}{2}$, we deduce that
$$\N(D_q)_{u_1} - \N(C_q)_{u_2} < 0\,,$$
and that $\N(G')-\N(G) < 0$. This contradicts the minimality of $G$ with respect to the number of connected sets, thus proving the claim.

\medskip
{\sc Claim~2}: $G$ cannot be of type III.\\
Suppose not. By construction (see Figure~\ref{fig:TypeIII}), $G=G^*$ has no cut vertex (i.e. $G-u$ is still connected for every $u\in \V(G)$). We proved in~\cite{AudaceCut2019} that $C_n$ is the unique graph having the smallest number of connected sets among all $n$-vertex connected graphs with no cut vertex. Thus $\N(G) > \N(C_n)$. On the other hand, 
\begin{align*}
\N(C_n) - \N(L_n) = n^2-n+1 - \frac{(n+6)(n-1)}{2} = \frac{n^2 -7n +8}{2}\,,
\end{align*}
which is positive for all $n>5$. Therefore, $\N(G) > \N(C_n) > \N(L_n)$. In particular, $G$ cannot reach the smallest number of connected sets over $\mathbb{B}_n$. This proves Claim~2.

\medskip
As a consequence of the above two Claims, a graph $G\in \mathbb{B}_n$ that minimises the number of connected sets must be of type~I.

{\sc Claim~3}: $G$ is isomorphic to $L_n$.\\
Suppose not. Replace the cycle $C_q$ ($q\geq p$) in $G=G^*$ (see Figure~\ref{fig:TypeI}) with the tadpole graph $D_q$ to obtain the same shape $G'$, as shown in Figure~\ref{fig:FromTypeII}. Using Lemma~\ref{Lem:identify}, we obtain
\begin{align*}
\N(G')-\N(G) = \N(D_q)-\N(C_q) + (\N(D_q)_{u_1} - \N(C_q)_{u_2} ) \big(\N (G - (V(C_q)\backslash \{u_2\}))_{u_2} -1 \big) \,,
\end{align*}
where $u_1$ is the unique pendant vertex of $D_q$. Since $\N(D_q)_{u_1} - \N(C_q)_{u_2} \leq 0$ and $D_m$ is the only $m$-vertex unicyclic graph with the smallest number of connected sets, we deduce that $\N(G')-\N(G) < 0$.

If $p=3$ then $G'$ is isomorphic to $L_n$. Otherwise, we apply the same graph transformation to $G'$ (replacing $C_p$ with $D_p$) to obtain $G''$. Hence  $\N(G'') < \N(G') < \N(G)$. Since $G''$ is isomorphic to $L_n$, the proof of the claim (thus also the theorem) is finished.
\end{proof}

\medskip
In what follows, we shift our focus on the maximisation problem. For our purposes, we shall even determine both the largest and the second-largest numbers of connected sets over $\mathbb{B}_n$, for all $n\geq 8$.

\medskip
Let $n\geq 6$ be an integer. Starting with two vertex disjoint cycles $C_3$ and $C_3$, first identify one vertex in a $C_3$ and one vertex in the other $C_3$ by vertex $w$, and then attach $n-5$ pendant edges to $w$. We denote the resulting graph by $R_n$. Clearly, $R_n \in \mathbb{B}_n$.

Similarly, we define $B_n$ to be the graph constructed from $A_4$ by attach $n-4$ pendant edges to the same vertex $w$ of degree $3$ in $A_4$. Clearly, $B_n \in \mathbb{B}_n$.

As usual, the $n$-vertex star graph is denoted by $S_n$.

\begin{lemma}\label{Form:RnBn}
We have
\begin{align*}
\N(R_n)&=n+1+2^{n-1}\\
\N(B_n)&=n+2+2^{n-1} =\N(R_n) +1
\end{align*}
for all $n\geq 6$.
\end{lemma}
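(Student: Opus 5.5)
Both formulas follow from splitting connected sets according to whether they contain the central vertex $w$; the pendant vertices then contribute a factor $2^{n-5}$ or $2^{n-4}$.

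\textbf{The graph $R_n$.} Here $w$ is the vertex shared by the two triangles, and it carries $n-5$ pendant neighbours. We write $\N(R_n)=\N(R_n-w)+\N(R_n)_w$.

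Deleting $w$ leaves two copies of $P_2$ together with $n-5$ isolated vertices. Hence
\begin{align*}
\N(R_n-w)=3+3+(n-5)=n+1.
\end{align*}

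A connected set containing $w$ is determined by independent choices: a connected set through $w$ in each triangle, and an arbitrary subset of the pendant vertices. The first triangle gives $\N(C_3)_w=4$ choices and so does the second, while the pendant vertices give $2^{n-5}$ choices. Therefore $\N(R_n)_w=16\cdot 2^{n-5}=2^{n-1}$. Equivalently, one can apply Lemma~\ref{Lem:identify} twice, first to glue the two triangles at $w$ and then to glue on the star $S_{n-4}$ at its centre. Adding the two counts gives $\N(R_n)=n+1+2^{n-1}$.

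\textbf{The graph $B_n$.} Here $w$ is a degree-$3$ vertex of $A_4$. Write $x$ for the other degree-$3$ vertex of $A_4$ and $y,z$ for its two degree-$2$ vertices. The vertex $w$ carries $n-4$ pendant neighbours.

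Deleting $w$ leaves the path $y\,x\,z$, which is $P_3$, together with $n-4$ isolated vertices. Hence
\begin{align*}
\N(B_n-w)=6+(n-4)=n+2.
\end{align*}

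To count sets through $w$, first note that $\N(A_4)_w$ equals $1$ plus the number of nonempty subsets of $\{x,y,z\}$, since $w$ is adjacent to all three. This gives $\N(A_4)_w=8$, in agreement with the proof of Proposition~\ref{Prop:nplus3}. The pendant vertices are again chosen freely, so $\N(B_n)_w=8\cdot 2^{n-4}=2^{n-1}$. Adding, $\N(B_n)=n+2+2^{n-1}=\N(R_n)+1$.

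The only point that needs care is the independence of the choices made on each side of $w$ once $w$ is included. This holds because any union of connected sets that all contain $w$ is itself connected.
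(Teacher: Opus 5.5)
Your proof is correct and matches the paper for $R_n$, which uses the same split $\N(R_n)=\N(R_n)_w+\N(R_n-w)$ and notes that $w$ is adjacent to every other vertex, so $\N(R_n)_w=2^{n-1}$. For $B_n$ the paper instead applies Lemma~\ref{Lem:identify} to $A_4$ and the star $S_{n-3}$ glued at $w$, which amounts to your direct split at $w$; and since $w$ also dominates $B_n$, $\N(B_n)_w=2^{n-1}$ is immediate without your independence remark.
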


\begin{proof}
The graph $R_n-w$ consists of two copies of $P_2$ and $n-5$ copies of $P_1$, while $w$ is adjacent to all other vertices of $R_n$. Thus
\begin{align*}
\N(R_n)=\N(R_n)_w + \N(R_n-w)=2^{n-1}+ 2\N(P_2)+ n-5=2^{n-1}+n+1\,.
\end{align*}
By Lemma~\ref{Lem:identify},
\begin{align*}
\N(B_n)=\N(A_4)+\N(S_{n-3}) -1 + (\N(A_4)_w -1 )(\N(S_{n-3})_w -1 )\,,
\end{align*}
where $w$ is also the center of the star. Thus
\begin{align*}
\N(B_n)=14 +(2^{n-4}+n-4) -1 + (8 -1 )(2^{n-4}-1 )=2^{n-1}+n+2\,,
\end{align*}
as stated.
\end{proof}

\medskip
We shall prove that $B_n$ uniquely maximises the number of connected sets among all $n$-vertex bicyclic graphs, provided $n>7$. Henceforth, we assume $n\geq 8$.

\medskip

Lemma~\ref{singleBranch} below is an important tool to proving our second theorem. It already appeared in~\cite{AudaceGirth2018,AudTOUT}.

\begin{lemma}[\cite{AudaceGirth2018,AudTOUT}]\label{singleBranch}
	Let $L,M,R$ be three non-trivial connected graphs whose vertex sets are pairwise disjoints. Let $l \in V(L),~ r\in V(R)$ and $u,v \in V(M)$ be fixed vertices such that $u \neq v$. Denote by $G$ the graph obtained from $L,M,R$ by identifying $l$ with $u$, and $r$ with $v$. Similarly, let $G'$ (resp. $G''$) be the graph obtained from $L,M,R$ by identifying both $l,r$ with $u$ (resp. both $l,r$ with $v$); see Figure~\ref{diagGGpGpp} for a diagram of these graphs.
	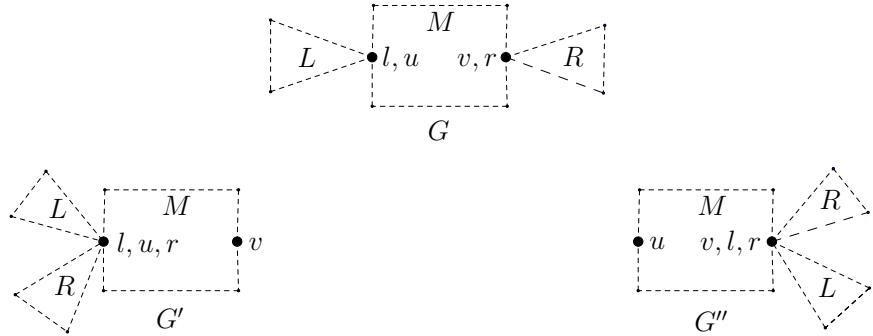
\begin{figure}[htbp]\centering
		\definecolor{qqqqff}{rgb}{0.,0.,1.}
		\resizebox{0.7\textwidth}{!}{%
		\begin{tikzpicture}[line cap=round,line join=round,>=triangle 45,x=1.0cm,y=1.0cm]
		rectangle (17.25469285535401,15.543118314192673);
		\draw [dash pattern=on 2pt off 2pt] (8.02,12.78)-- (10.02,12.78);
		\draw [dash pattern=on 2pt off 2pt] (10.02,12.78)-- (10.,12.);
		\draw [dash pattern=on 2pt off 2pt] (10.,12.)-- (10.02,11.26);
		\draw [dash pattern=on 2pt off 2pt] (10.02,11.26)-- (8.,11.26);
		\draw [dash pattern=on 2pt off 2pt] (8.,11.26)-- (8.,12.);
		\draw [dash pattern=on 2pt off 2pt] (8.,12.)-- (8.02,12.78);
		\draw [dash pattern=on 2pt off 2pt] (6.48,12.56)-- (8.,12.);
		\draw [dash pattern=on 2pt off 2pt] (6.48,11.48)-- (8.,12.);
		\draw [dash pattern=on 2pt off 2pt] (6.48,12.56)-- (6.48,11.48);
		\draw [dash pattern=on 2pt off 2pt] (10.,12.)-- (11.48,12.48);
		\draw [dash pattern=on 4pt off 4pt] (10.,12.)-- (11.46,11.46);
		\draw [dash pattern=on 2pt off 2pt] (11.48,12.48)-- (11.46,11.46);
		\draw (8.002875389073733,12.308886980787792) node[anchor=north west] {$l,u$};
		\draw (9.126357554291436,12.19457622267538) node[anchor=north west] {$v,r$};
		\draw (6.736460949518799,12.287683803799489) node[anchor=north west] {$L$};
		\draw (10.70036563986977,12.304751529462257) node[anchor=north west] {$R$};
		\draw [dash pattern=on 2pt off 2pt] (4.,10.)-- (6.,10.);
		\draw [dash pattern=on 2pt off 2pt] (6.,10.)-- (5.98,9.22);
		\draw [dash pattern=on 2pt off 2pt] (5.98,9.22)-- (6.,8.48);
		\draw [dash pattern=on 2pt off 2pt] (6.,8.48)-- (3.98,8.48);
		\draw [dash pattern=on 2pt off 2pt] (3.98,8.48)-- (3.98,9.22);
		\draw [dash pattern=on 2pt off 2pt] (3.98,9.22)-- (4.,10.);
		\draw [dash pattern=on 2pt off 2pt] (3.12,10.28)-- (3.98,9.22);
		\draw [dash pattern=on 2pt off 2pt] (2.6,9.6)-- (3.98,9.22);
		\draw [dash pattern=on 2pt off 2pt] (3.12,10.28)-- (2.6,9.6);
		\draw (4.031552359486242,9.502599906980058) node[anchor=north west] {$l,u,r$};
		\draw (6.006612285785838,9.41691066509247) node[anchor=north west] {$v$};
		\draw (3.008420807842294,10.0050302314765) node[anchor=north west] {$L$};
		\draw (3.087042324067116,8.845683517584332) node[anchor=north west] {$R$};
		\draw [dash pattern=on 2pt off 2pt] (12.,10.)-- (14.,10.);
		\draw [dash pattern=on 2pt off 2pt] (14.,10.)-- (13.98,9.22);
		\draw [dash pattern=on 2pt off 2pt] (13.98,9.22)-- (14.,8.48);
		\draw [dash pattern=on 2pt off 2pt] (14.,8.48)-- (11.98,8.48);
		\draw [dash pattern=on 2pt off 2pt] (11.98,8.48)-- (11.98,9.22);
		\draw [dash pattern=on 2pt off 2pt] (11.98,9.22)-- (12.,10.);
		\draw [dash pattern=on 2pt off 2pt] (13.98,9.22)-- (14.9,10.32);
		\draw [dash pattern=on 4pt off 4pt] (13.98,9.22)-- (15.42,9.66);
		\draw [dash pattern=on 2pt off 2pt] (14.9,10.32)-- (15.42,9.66);
		\draw (12.000266144323993,9.432599906980058) node[anchor=north west] {$u$};
		\draw (12.761465421630708,9.532775213766935) node[anchor=north west] {$v,l,r$};
		\draw (14.542891846445562,10.147962505813732) node[anchor=north west] {$R$};
		\draw [dash pattern=on 2pt off 2pt] (2.66,8.42)-- (3.98,9.22);
		\draw [dash pattern=on 2pt off 2pt] (3.44,7.86)-- (3.98,9.22);
		\draw [dash pattern=on 2pt off 2pt] (2.66,8.42)-- (3.44,7.86);
		\draw (8.654102536581874,12.838560337733874) node[anchor=north west] {$M$};
		\draw (4.70415799076956,10.0550302314765) node[anchor=north west] {$M$};
		\draw (12.718182533719721,10.0550302314765) node[anchor=north west] {$M$};
		\draw [dash pattern=on 2pt off 2pt] (13.98,9.22)-- (15.44,8.52);
		\draw [dash pattern=on 2pt off 2pt] (14.78,7.92)-- (13.98,9.22);
		\draw [dash pattern=on 2pt off 2pt] (15.44,8.52)-- (14.78,7.92);
		\draw [dash pattern=on 2pt off 2pt] (15.44,8.52)-- (14.78,7.92);
		\draw [dash pattern=on 2pt off 2pt] (15.44,8.52)-- (14.78,7.92);
		\draw (14.521338055883508,8.817237308146387) node[anchor=north west] {$L$};
		\draw (8.682548746019819,11.18282315480661) node[anchor=north west] {$G$};
		\draw (4.618293442095093,8.37342849987477) node[anchor=north west] {$G'$};
		\draw (12.675250259382487,8.330496225537535) node[anchor=north west] {$G''$};
		
		\draw [fill=black] (8.02,12.78) circle (0.5pt);
		\draw [fill=black] (10.02,12.78) circle (0.5pt);
		\draw [fill=black] (8.,11.26) circle (0.5pt);
		\draw [fill=black] (10.02,11.26) circle (0.5pt);
		\draw [fill=black] (8.,12.) circle (2.0pt);
		\draw [fill=black] (10.,12.) circle (2.0pt);
		\draw [fill=black] (6.48,12.56) circle (0.5pt);
		\draw [fill=black] (6.48,11.48) circle (0.5pt);
		\draw [fill=qqqqff] (11.48,12.48) circle (0.5pt);
		\draw [fill=qqqqff] (11.46,11.46) circle (0.5pt);
		\draw [fill=black] (4.,10.) circle (0.5pt);
		\draw [fill=black] (6.,10.) circle (0.5pt);
		\draw [fill=black] (3.98,8.48) circle (0.5pt);
		\draw [fill=black] (6.,8.48) circle (0.5pt);
		\draw [fill=black] (3.98,9.22) circle (2.0pt);
		\draw [fill=black] (5.98,9.22) circle (2.0pt);
		\draw [fill=black] (3.12,10.28) circle (0.5pt);
		\draw [fill=black] (2.6,9.6) circle (0.5pt);
		\draw [fill=black] (12.,10.) circle (0.5pt);
		\draw [fill=black] (14.,10.) circle (0.5pt);
		\draw [fill=black] (11.98,8.48) circle (0.5pt);
		\draw [fill=black] (14.,8.48) circle (0.5pt);
		\draw [fill=black] (11.98,9.22) circle (2.0pt);
		\draw [fill=black] (13.98,9.22) circle (2.0pt);
		\draw [fill=black] (15.44,8.52) circle (0.5pt);
		\draw [fill=black] (14.78,7.92) circle (0.5pt);
		\draw [fill=qqqqff] (14.9,10.32) circle (0.5pt);
		\draw [fill=qqqqff] (15.42,9.66) circle (0.5pt);
		\draw [fill=black] (3.44,7.86) circle (0.5pt);
		\draw [fill=black] (2.66,8.42) circle (0.5pt);
		\end{tikzpicture}}
		\caption{The graphs $G,G',G''$ constructed in Lemma~\ref{singleBranch}.}\label{diagGGpGpp}
	\end{figure}

We have 
\begin{align*}
	\N(G') - \N(G)&= (\N(R)_r -1) (\N(L)_l \cdot \N(M-v)_u - \N(M-u)_v)\,,\\
	\N(G'') - \N(G)&= (\N(L)_l -1) (\N(R)_r \cdot \N(M-u)_v - \N(M-v)_u)\,,
	\end{align*}
	In particular, it holds that
	\begin{align*}
	\N(G') > \N(G) ~~ \text{or} ~~ \N(G'') > \N(G)\,.
	\end{align*}
\end{lemma}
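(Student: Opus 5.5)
The plan is to compare all three graphs by sorting connected sets according to how they meet the two distinguished vertices. In $G$ I write $u$ for the vertex obtained by merging $l$ and $u$, and $v$ for the vertex obtained by merging $r$ and $v$. In $G'$ the merged vertex is $u=l=r$, and $v$ stays as it is. Every connected set $S$ then falls into exactly one of four classes: it avoids both $u$ and $v$, contains $u$ but not $v$, contains $v$ but not $u$, or contains both.

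\emph{Class 1: sets avoiding $u$ and $v$.} Deleting $u$ and $v$ leaves, in both $G$ and $G'$, the disjoint union of $L-l$, $R-r$ and $M-\{u,v\}$. So this class contributes
\[
\N(L-l)+\N(R-r)+\N(M-\{u,v\})
\]
in both graphs, and it cancels in the difference.

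\emph{Classes 2--4 in $G$.} A connected set through a cut vertex is the union of independent choices of connected sets through that vertex in each block-piece. This is the same product principle already used in the proof of Lemma~\ref{Lem:identify}. It gives:
\begin{itemize}
\item sets containing $u$ but not $v$: $\N(L)_l\,\N(M-v)_u$;
\item sets containing $v$ but not $u$: $\N(R)_r\,\N(M-u)_v$;
\item sets containing both: $\N(L)_l\,\N(R)_r\,\N(M)_{u,v}$.
\end{itemize}

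\emph{Classes 2--4 in $G'$.} Here both $L$ and $R$ hang at $u$, so the counts become:
\begin{itemize}
\item sets containing $u$ but not $v$: $\N(L)_l\,\N(R)_r\,\N(M-v)_u$;
\item sets containing $v$ but not $u$: $\N(M-u)_v$;
\item sets containing both: again $\N(L)_l\,\N(R)_r\,\N(M)_{u,v}$.
\end{itemize}

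Subtracting, only classes 2 and 3 survive:
\[
\N(G')-\N(G)=\bigl(\N(R)_r-1\bigr)\bigl(\N(L)_l\,\N(M-v)_u-\N(M-u)_v\bigr),
\]
which is the first formula. The formula for $G''$ follows by the symmetric computation with the roles of $(L,l,u)$ and $(R,r,v)$ interchanged.

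For the ``in particular'' statement, set
\[
a=\N(L)_l,\quad b=\N(R)_r,\quad x=\N(M-v)_u,\quad y=\N(M-u)_v .
\]
Since $L$ and $R$ are non-trivial and connected, $l$ has a neighbour, so $a\ge 2$; likewise $b\ge 2$. Also $x,y\ge 1$, because $\{u\}$ and $\{v\}$ are connected sets. The prefactors $b-1$ and $a-1$ are therefore positive.

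Suppose, for contradiction, that both differences are $\le 0$. Then $ax\le y$ and $by\le x$. Combining these gives $abx\le by\le x$, hence $ab\le 1$, which contradicts $ab\ge 4$.

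The only step requiring care is the product decomposition of the sets through a glued vertex. The point to check is that removing the glued vertex separates the pieces, so each such connected set corresponds bijectively to a tuple of connected sets through that vertex in each piece. I expect no real obstacle beyond this bookkeeping.
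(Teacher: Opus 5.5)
Your proof is correct and complete. The paper does not prove this lemma; it imports it from \cite{AudaceGirth2018,AudTOUT}, so there is no in-paper argument to compare against.

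Your route is the natural self-contained one: split connected sets by whether they contain $u$, $v$, both, or neither, and count each class with the cut-vertex product principle that underlies Lemma~\ref{Lem:identify}. The one point you rightly flag deserves an explicit sentence. In the class containing both $u$ and $v$, take a path inside $S$ between two vertices of $M$. It cannot detour through $L-l$ or $R-r$, because each is attached by a single vertex and a path cannot pass through that vertex twice. Hence $S\cap V(M)$ is connected in $M$, and the count $\N(L)_l\,\N(R)_r\,\N(M)_{u,v}$ is justified in both $G$ and $G'$.

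Your derivation of the strict inequality is also fine. It uses $\N(L)_l,\N(R)_r\ge 2$, which holds by non-triviality, and $\N(M-v)_u,\N(M-u)_v\ge 1$, via the chain $abx\le by\le x$.
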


\medskip
The next proposition is fairly well-known, see for instance~\cite{Audacegenral2018,Wang2005}.

\begin{proposition}\label{upperTr}
Let $T$ be a tree on $n$ vertices whose root is $v$. Then $\N(T)_v\leq 2^{n-1}$. Equality holds if and only if $T$ is the star $S_n$ and $v$ is the center of $T$.
\end{proposition}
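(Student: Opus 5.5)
The proof will be a direct injection argument, followed by a check of when the injection is onto. Write $V(T)=\{v\}\cup W$ with $|W|=n-1$. Every connected set of $T$ containing $v$ has the form $\{v\}\cup X$ with $X\subseteq W$. So the map $S\mapsto S\setminus\{v\}$ is an injection from the connected sets of $T$ that contain $v$ into the power set of $W$. This gives $\N(T)_v\leq 2^{n-1}$ immediately.

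For the equality statement, I first check that the star attains the bound. If $T=S_n$ and $v$ is its center, then for every $X\subseteq W$ the set $\{v\}\cup X$ induces a star, which is connected. Hence every subset of $W$ is hit by the injection, and $\N(S_n)_v=2^{n-1}$.

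For the converse I will argue by contraposition. Suppose $T$ is not a star centered at $v$, meaning some vertex $w\in W$ is not adjacent to $v$. Then $d(v,w)\geq 2$, and since $T$ is a tree, the unique $v$--$w$ path has an internal vertex $x$. Every connected set containing both $v$ and $w$ must contain this whole path, because in a tree the induced connected subgraph containing $v$ and $w$ contains the unique path between them. Therefore $\{v,w\}$ is not connected, and the subset $X=\{w\}$ of $W$ is not in the image of the injection. This gives $\N(T)_v\leq 2^{n-1}-1<2^{n-1}$. So equality forces $v$ to be adjacent to all other vertices, which means $T=S_n$ with center $v$. For $n\leq 2$ the statement is trivial, since every tree on at most two vertices is a star centered at any vertex; the argument above covers this case too.

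I do not expect any real obstacle here. The only point needing care is the justification that a connected induced subgraph of a tree containing two vertices contains the unique path between them. This follows because a connected subgraph containing both vertices contains some path between them, and in a tree that path is unique.
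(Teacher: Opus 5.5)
Your proof is correct and complete. The paper does not actually prove this proposition; it states it as well known and refers to the literature. So there is no in-paper argument to compare against, and your injection $S\mapsto S\setminus\{v\}$ into the power set of $V(T)\setminus\{v\}$ is a self-contained proof.

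\textbf{A simplification.} You do not need the detour through the unique $v$--$w$ path and its internal vertex $x$. If $vw\notin E(T)$, then $\{v,w\}$ induces an edgeless graph on two vertices, so it is disconnected outright.

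\textbf{A generalisation.} With that change, your argument uses the tree hypothesis only in the last step: a tree in which $v$ is adjacent to every other vertex is $S_n$ centred at $v$. So it proves the stronger statement that $\N(G)_v\le 2^{n-1}$ for every graph $G$ of order $n$, with equality if and only if $v$ is adjacent to all other vertices. The ``if'' direction of this is exactly what the paper uses without comment elsewhere, for instance $\N(Q_{|V(H)|})_u=2^{|V(H)|-1}$ and $\N(R_n)_w=2^{n-1}$.

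\textbf{Comparison with the inductive route.} The other natural proof uses the branch product formula $\N(T)_v=\prod_i\bigl(1+\N(T_i)_{v_i}\bigr)$, where the product runs over the components $T_i$ of $T-v$ and $v_i$ is the neighbour of $v$ in $T_i$. One then inducts with $1+2^{n_i-1}\le 2^{n_i}$, which is an equality if and only if $n_i=1$. That argument is tree-specific and needs induction. It does not give the general dominating-vertex characterisation that yours gives essentially for free.
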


\medskip
Let $n>2$ and $Q_n$ be the graph that results from adding one edge between two leaves of the star $S_n$. Clearly, $Q_n$ contains only one cycle, which is $C_3$. 

\medskip
The largest and second-largest bicyclic graphs with respect to the number of connected sets are completely described in the next theorem.

\begin{theorem}
Let $n\geq 8$ and $G \in \mathbb{B}_n$. Then
$$\N(G) \leq \N(B_n)\,,$$
with equality if and only if $G$ is isomorphic to $B_n$. Moreover,
$$\N(G) \leq \N(R_n)$$ holds for all $G\in \mathbb{B}_n \backslash \{B_n\}$.
\end{theorem}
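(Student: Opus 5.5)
Let $G\in\mathbb{B}_n$ be a graph maximising $\N(G)$. The plan is to use transformations that never decrease $\N$ to push $G$ towards $B_n$ or $R_n$, and then to compare all remaining candidates directly with $\N(B_n)=2^{n-1}+n+2$ and $\N(R_n)=2^{n-1}+n+1$ from Lemma~\ref{Form:RnBn}.

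\emph{First step: all trees hang at one vertex and are stars.} By Lemma~\ref{Gstar}, $G$ is $G^*$ with trees rooted at vertices of $G^*$. If two vertices $u\neq v$ of $G^*$ both carry a nontrivial tree, apply Lemma~\ref{singleBranch}. Take $L$ and $R$ to be the trees and $M$ to be the rest of $G$. Moving both trees to $u$ or both to $v$ strictly increases $\N$, so in an extremal graph all pendant trees are attached at a single vertex $w$ of $G^*$. Merging them gives one tree $T$ rooted at $w$. By Lemma~\ref{Lem:identify},
\[
\N(G)=\N(G^*)+\N(T)-1+(\N(G^*)_w-1)(\N(T)_w-1).
\]
This expression increases with $\N(T)_w$, and also with $\N(T)$. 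Proposition~\ref{upperTr} then shows that $T$ must be a star centred at $w$, with $k=n-|V(G^*)|$ leaves. Hence
\[
\N(G)=\N(G^*)+2^{k}+k-1+(\N(G^*)_w-1)(2^{k}-1).
\]

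\emph{Second step: shrink the core.} The dominant term is $\N(G^*)_w\cdot 2^{k}$, and this forces $G^*$ to be small. Write $m=|V(G^*)|$. The key estimate is $\N(G^*)_w\le 2^{m-1}$, with equality only if $w$ is adjacent to every other vertex of $G^*$. Indeed, every subset of $V(G^*)\setminus\{w\}$ together with $w$ is then connected, and if some vertex $x$ is not adjacent to $w$, the set $\{w,x\}$ is not connected.

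The graphs $G^*$ with $w$ universal and no pendant vertices are exactly $A_4$ with $w$ of degree $3$, and the bowtie $C_3\cdot C_3$ with $w$ the shared vertex. These two cores give $B_n$ and $R_n$. For every other core, $\N(G^*)_w\le 2^{m-1}-c$ for a definite $c\ge 1$, so
\[
\N(G)\le 2^{n-1}-c\,2^{k}+\N(G^*)+O(k).
\]
I must show that this falls below $\N(R_n)$. Two regimes need handling. When $m$ is large, one can use the crude bound $\N(G^*)_w\le 2^{m-2}$ or better; when $m$ is small, I would list the cores explicitly. The small cores are $C_p\cdot C_q$, $G[p,q,r]$, and theta graphs, on $m\le 6$ or so vertices. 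Checking them relies on Lemma~\ref{cyclev} and on $n\ge 8$.

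\emph{Main obstacle.} The hardest step is controlling $\N(G^*)$ itself, since it can approach $2^{m}$ when $k$ is small. The case $k=0$, where $G=G^*$ is pendant-free, is the critical one. Here I would bound $\N$ of a pendant-free bicyclic graph directly by type. Lemma~\ref{Lem:identify} handles type~II. For type~III, I would count via cycle structure: a connected set either contains no branch vertex or is determined by arcs, which gives a bound polynomial in $m$. For type~I, Lemma~\ref{Lem:identify} applies twice. Each of these counts is polynomial in $n$, hence well below $2^{n-1}$ once $n\ge 8$; this is where the hypothesis $n\ge 8$ enters.

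Finally, the strict inequality $\N(B_n)=\N(R_n)+1$ shows that $B_n$ is the unique maximiser. The same case analysis shows that every graph other than $B_n$ and $R_n$ has value at most $\N(R_n)$, which gives the second-largest statement.
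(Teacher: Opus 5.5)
Your first step (all pendant trees moved to one core vertex $w$, then turned into a star) matches the paper's first step. The identity
\[
\N(G)=\N(G^*-w)+2^{k}\,\N(G^*)_w+k,
\]
together with your correct observation that only $A_4$ and $C_3\cdot C_3$ have a universal vertex, is a sound and more direct alternative to the paper's replacement of $H$ by $Q_{|V(H)|}$. The gap is that the step that actually proves the theorem, namely that every other pair $(G^*,w)$ gives at most $\N(R_n)$, is only announced. The tools you propose for it also do not work as stated.

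Since $\N(R_n)=2^{m+k-1}+m+k+1$, what must be verified is
\[
2^{k}\bigl(2^{m-1}-\N(G^*)_w\bigr)>\N(G^*-w)-m-1\qquad\text{for all } k\ge\max(0,8-m).
\]
This has to be checked core by core; a deficiency $c\ge 1$ plus an $O(k)$ term does not give it. The binding case is $G^*=K_{2,3}$ (the paper's $E_{5,2}$) with $w$ of degree $3$. There $\N(G^*)_w=15=2^4-1$ and $\N(G^*-w)=\N(S_4)=11$, so $\N(R_n)-\N(G)=2^{k}-5$, which is positive only for $k\ge 3$. At $n=7$ this graph has $73=\N(B_7)$ connected sets. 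Your displayed estimate keeps $\N(G^*)=26$ instead of $\N(G^*-w)=11$, so here it would not even cover $n=8,9$. So the hypothesis $n\ge 8$ enters through small cores carrying pendant edges, not through the pendant-free case as you claim. For $k=0$, every pendant-free bicyclic graph on $7$ vertices has at most $66<72=\N(R_7)$ connected sets; the pendant-free case first fails at $n=6$, where $\N(E_{6,2})=42>\N(B_6)=40$.

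Your large-$m$ tool is also unavailable where you want to use it. The bound $\N(G^*)_w\le 2^{m-2}$ is false at $m=7$: the theta graph $E_7$ at a branch vertex has $\N(E_7)_w=41>32$. It is also false at $m=8$: $C_4\cdot C_5$ at the shared vertex has $\N_w=7\cdot 11=77>64$. So the explicit list cannot stop at ``$m\le 6$ or so''. It must cover all cores of all three types on at most $8$ vertices, at every choice of $w$. For larger $m$ you must actually prove both $\N(G^*)_w\le 2^{m-2}$ and $\N(G^*)<2^{m-1}+m+1$, which together suffice for every $k$, with explicit bounds rather than ``polynomial, hence small''.

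These verifications are exactly what the paper supplies. It uses the type~I/II reduction to $R_m$ via the unicyclic extremality of $Q_m$, a counting bound for type~III cores whose longest path has at least five vertices, and the table for $E_{5,1},\dots,E_8$. Without the corresponding computations in your framework, the proposal is a plan rather than a proof.
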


\begin{proof}
Let $n\geq 8$ and $G_0 \in \mathbb{B}_n$. By Lemma~\ref{Gstar}, $G_0$ can be obtained from its bicyclic subgraph $G_0^*$ by attaching trees at the vertices of $G_0^*$. We make the specialisation $M:=G_0^*$ and repeatedly apply Lemma~\ref{singleBranch}, until we get a new graph $G_1\in \mathbb{B}_n$ that can also be constructed from $G_1^*=G_0^*$ by attaching (possibly none) a tree $T_1$ rooted at only one vertex, say $v$ of $G_1^*$. Thus, $\N(G_1)\geq \N(G_0)$, with equality if and only if $G_1$ is isomorphic to $G_0$.

\medskip
Consider the bicyclic graph $G_1$. As was described in Lemma~\ref{Lem:identify}, `replace' the tree $T_1$ with the star $S_{|V(T_1)|}$ whose center is $v$, to obtain a graph $G_2\in \mathbb{B}_n$. This gives us
\begin{align*}
\N(G_2)-\N(G_1)=\N(S_{|V(T_1)|}) - \N(T_1) + (\N(S_{|V(T_1)|})_{v} - \N(T_1)_v)(\N(G_1^*)_{v} -1 )\,.
\end{align*}
It is a folklore statement that $S_m$ uniquely reaches the maximum number of connected sets among all $m$-vertex trees. Together with Proposition~\ref{upperTr}, we deduce that $\N(G_2) \geq \N(G_1)$, with equality if and only if $T_1$ is a star and $v$ its center.

\medskip
Consider $G_2$, and let us first focus on its subgraph $G_2^*=G_1^*=G_0^*$.
\begin{itemize}
\item Assume $G_2^*$ is of type~I or type~II (see Figures~\ref{fig:TypeI} and~\ref{fig:TypeII} with $q\geq p$), and let $H:=G_2^* - (V(C_p) \backslash u)$, where $u$ is the unique common vertex of $C_p$ and $H$ in $G_2^*$. As long as $|V(H)|\geq 5$, `replace' the part $H$ of $G_2^*$ with the graph $Q_{|V(H)|}$ to obtain a graph $G'_3\in \mathbb{B}_n$, as shown in Figure~\ref{fig:TypeIorIItoQ}.
\begin{figure}[h]
	\begin{center}
		\begin{tikzpicture}
		[scale=0.4,inner sep=8mm, 
		vertex/.style={circle,thick,draw}, 
		thickedge/.style={line width=2pt}] 
		\node[vertex] (a1) at (0,0) [fill=white,dashed] {};
		\node[vertex] (b1) at (3,0) [inner sep=1mm,fill=black] {};  
		\node[vertex] (b2) at (9,1.5) [inner sep=1mm,fill=black] {};   
		\node[vertex] (b3) at (9,-1.5) [inner sep=1mm,fill=black] {};   
	\node[vertex] (c1) at (3,-3) [inner sep=1mm,fill=black] {}; 
	\node[vertex] (d) at (4,-3) [inner sep=0.5mm,fill=white] {}; 
		\node[vertex] (e) at (5,-3) [inner sep=0.5mm,fill=white] {}; 
		\node[vertex] (f) at (6,-3) [inner sep=0.5mm,fill=white] {}; 
	\node[vertex] (c2) at (7,-3) [inner sep=1mm,fill=black] {};  
	\draw[thick,black] (b1)--(b2) (b1)--(b3) (b2)--(b3);     
	\draw[thick,black] (b1)--(c1) (b1)--(c2);      
		\end{tikzpicture}
	\end{center}
	\caption{Replacing part $H$ of Type~I or Type~II with $Q_{|V(H)|}$ to obtain $G'_3\in \mathbb{B}_n$.} \label{fig:TypeIorIItoQ}
\end{figure}
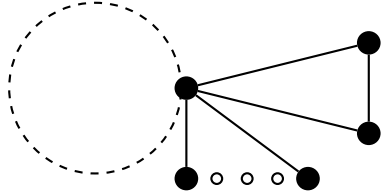
Vertex $u$ is adjacent to all other vertices in $Q_{|V(H)|}$, thus $\N(Q_{|V(H)|})_u=2^{|V(H)| -1}$. On the other hand, it was proved in~\cite{Audacegenral2018} that $Q_m$ uniquely maximises the number of connected sets among all unicyclic graphs of order $m>5$. Hence Lemma~\ref{Lem:identify} implies that $\N(G'_3) > \N(G^*_2)$, provided $|V(H)|>5$. However, if $m=5$ then $Q_5$ and $C_5$ are the only $m$-vertex unicyclic graphs with the maximum number of connected sets. By $q\geq p \geq 3$, vertex $u$ cannot be adjacent to all other vertices of $H$, unless $q=3$ (i.e. $G_2$ is isomorphic to $R_n$). Assume then $q>3$. It follows again by Lemma~\ref{Lem:identify} that $\N(G'_3) > \N(G^*_2)$ in the case where $|V(H)|=5$.

Whenever necessary, we can apply, provided $p\geq 5$, this same transformation on the part $C_p$ of $G'_3$ to obtain the graph $R_{|V(G'_3)|}$ and the inequality $\N(R_{|V(G'_3)|}) > \N(G'_3)$. Hence 
\begin{align*}
\N(R_{|V(G'_3)|}) > \N(G'_3) > \N(G^*_2) \quad \text{and} \quad \N(G_2) \geq \N(G_1) \geq \N(G_0)\,.
\end{align*}
We are left with the situation where $|V(H)|\leq 4$ and $p\leq 4$ hold simultaneously in the graph $G_2^*=G_0^*$. All such graphs $G_2^*$ are shown in Figure~\ref{fig:H4p4}.
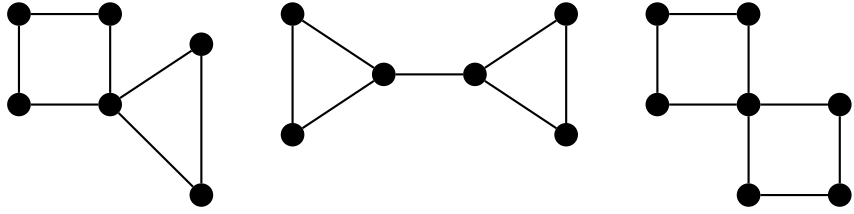
\begin{figure}[h]
	\begin{center}
		\begin{tikzpicture}
		[scale=0.4,inner sep=8mm, 
		vertex/.style={circle,thick,draw}, 
		thickedge/.style={line width=2pt}] 
		\node[vertex] (a1) at (0,0) [inner sep=1mm,fill=black] {};
		\node[vertex] (a2) at (3,0) [inner sep=1mm,fill=black] {};  
		\node[vertex] (a3) at (0,-3) [inner sep=1mm,fill=black] {};   
		\node[vertex] (a4) at (3,-3) [inner sep=1mm,fill=black] {};   
		\node[vertex] (a5) at (6,-1) [inner sep=1mm,fill=black] {};   
		\node[vertex] (a6) at (6,-6) [inner sep=1mm,fill=black] {};   
\draw[thick,black] (a1)--(a2) (a1)--(a3) (a2)--(a4) (a3)--(a4) (a4)--(a5) (a4)--(a6) (a5)--(a6);     
\node[vertex] (b1) at (9,0) [inner sep=1mm,fill=black] {};
		\node[vertex] (b2) at (9,-4) [inner sep=1mm,fill=black] {};  
		\node[vertex] (b3) at (12,-2) [inner sep=1mm,fill=black] {}; 
		\node[vertex] (b4) at (15,-2) [inner sep=1mm,fill=black] {};
		\node[vertex] (b5) at (18,0) [inner sep=1mm,fill=black] {};   
			\node[vertex] (b6) at (18,-4) [inner sep=1mm,fill=black] {};   
			\draw[thick,black] (b1)--(b2) (b1)--(b3) (b2)--(b3) (b3)--(b4) (b4)--(b5) (b4)--(b6) (b5)--(b6);     
\node[vertex] (c1) at (21,0) [inner sep=1mm,fill=black] {};
		\node[vertex] (c2) at (24,0) [inner sep=1mm,fill=black] {};  
		\node[vertex] (c3) at (21,-3) [inner sep=1mm,fill=black] {};   
		\node[vertex] (c4) at (24,-3) [inner sep=1mm,fill=black] {};   
		\node[vertex] (c7) at (27,-3) [inner sep=1mm,fill=black] {};   
		\node[vertex] (c5) at (24,-6) [inner sep=1mm,fill=black] {}; 
		\node[vertex] (c8) at (27,-6) [inner sep=1mm,fill=black] {}; 
		\draw[thick,black] (c1)--(c2) (c1)--(c3) (c2)--(c4) (c3)--(c4) (c4)--(c5) (c4)--(c7) (c5)--(c8) (c7)--(c8);     
		\end{tikzpicture}
	\end{center}
	\caption{All graphs $G_2^*=G_0^*$ corresponding to the case where $|V(H)|\leq 4$ and $p\leq 4$. These graphs contain precisely $37,~30,~61$ connected sets, respectively.} \label{fig:H4p4}
\end{figure}

Recall that $G_2 \in \mathbb{B}_n$ can be constructed from $G_2^*=G_0^*$ by attaching (possibly none) a star $S_r$ rooted at only one vertex, say $v$ of $G_2^*$. In view of Lemma~\ref{Form:RnBn}, 
\begin{align*}
\N(R_6)=39 > 37\,, \quad \N(R_7)=72 > 61\,.
\end{align*} 
In this case where $G_2^*$ is one of the graph shown in Figure~\ref{fig:H4p4}, we `replace' $G_2^*$ directly with $R_{|V(G_2^*)|}$ in $G_2$ to obtain the graph $R_n$. Therefore, Lemma~\ref{Lem:identify} combined with
\begin{align*}
\N(R_{|V(G_2^*)|}) > \N(G^*_2) \quad \text{and} \quad \N(G_2) \geq \N(G_1) \geq \N(G_0)
\end{align*}
give us $\N(R_n) > \N(G_2) \geq \N(G_1) \geq \N(G_0)$, and we are done.

\medskip
\item Assume $G_2^*$ is of type~III (see Figure~\ref{fig:TypeIII}). Then $G_2^*$ can be obtained from vertex disjoint paths $P_a$, $P_b$, $P_c$ by first identifying one endvertex of each of these paths, and then identifying the other endvertices of these paths. Thus $2\leq a\leq b \leq c$ and $b\geq 3$. Set $m:=|V(G^*_2)|$.

First, assume $c\geq 5$. Let $v$ be a vertex of $G_2^*$ such that $v_1', v_1, v, v_2, v_2'$ are five consecutive vertices on the path $P_c$. We extract certain categories of vertex subsets of $G_2^*$, each of which contains $v$:
\begin{itemize}
\item The union of $\{v\}$ and any nonempty subset of $V(G_2^*)\backslash \{v_1,v_2\}$; there are $2^{m-3}-1$ of them;
\item The union of $\{v,v_2\}$ and any nonempty subset of $V(G_2^*)\backslash \{v_1,v'_2\}$; there are $2^{m-4}-1$ of them;
\item The union of $\{v,v_1\}$ and any nonempty subset of $V(G_2^*)\backslash \{v_2,v'_1\}$; there are $2^{m-4}-1$ of them;
\item The union of $\{v,v_1,v_2\}$ and any nonempty subset of $V(G_2^*)\backslash \{v'_1,v'_2\}$; there are $2^{m-5}-1$ of them.
\end{itemize}
Each of the above subset of $V(G_2^*)$ contains $v$ and induces a disconnected graph. Consequently,
\begin{align*}
\N(G_2^*)_v \leq &2^{m-1} - (2^{m-3}-1) - (2^{m-4}-1) - (2^{m-4}-1) - (2^{m-5}-1)\\
&=2^{m-2}+4-2^{m-5} \leq 2^{m-2} +2\,,
\end{align*}
since $m\geq c+1 >5$. On the other hand, $\N(G_2^* -v) < \N(Q_{m-1})$, since $G_2^* -v$ is a unicyclic graph on $m-1$ vertices, which is not isomorphic to $Q_{m-1}$ or $C_5$.

Fix $z$, a vertex of degree $2$, belonging to a $C_3$ in $R_m$, and let $z'$ be the unique neigbhour of $z$ that lies on the other $C_3$ ($z'$ is adjacent to all other vertices of $R_m$). Then
\begin{align*}
\N(R_m)_z=\N(R_m)_{z,z'}+ \N(R_m-z')_z=2^{m-2}+2\,.
\end{align*} 
Hence
\begin{align*}
\N(R_m)&=\N(R_m)_z + \N(R_m-z) =\N(R_m)_z + \N(Q_{m-1}) =  2+2^{m-2} + \N(Q_{m-1}) \\
& >2+ 2^{m-2} + \N(G_2^* -v) \geq \N(G_2^*)_v + \N(G_2^* -v) = \N(G_2^*)\,.
\end{align*}
Recall that $G_2 \in \mathbb{B}_n$ can be derived from $G_2^*$ by attaching (possibly none) a star $S_{n-m}$ centered at only one vertex, say $v$ of $G_2^*$. Now `replace' $G_2^*$ with $R_m$ to obtain the graph $R_n$. We therefore infer from Lemma~\ref{Lem:identify} together with
\begin{align*}
\N(G_2) \geq \N(G_1) \geq \N(G_0)
\end{align*}
that $\N(R_n) > \N(G_2) \geq \N(G_1) \geq \N(G_0)$, and we are done with the case where $c\geq 5$. To finish the proof of the theorem, we need to rule out the case where $c\leq 4$ (thus $2\leq a \leq b \leq c \leq 4$). In this case $|V(G_2^*)| \leq 8$, see Figure~\ref{fig:2abc4}.
\begin{figure}[h]
	\begin{center}
		\begin{tikzpicture}
		[scale=0.4,inner sep=8mm, 
		vertex/.style={circle,thick,draw}, 
		thickedge/.style={line width=2pt}] 
		\node[vertex] (a1) at (0,0) [inner sep=1mm,fill=black] {};
		\node[vertex] (a2) at (3,3) [inner sep=1mm,fill=black] {};  
		\node[vertex] (a3) at (6,3) [inner sep=1mm,fill=black] {};   
		\node[vertex] (a4) at (8,1.5) [inner sep=1mm,fill=black] {};   
		\node[vertex] (a5) at (3,-3) [inner sep=1mm,fill=black] {};   
		\node[vertex] (a6) at (6,-3) [inner sep=1mm,fill=black] {};  
		\node[vertex] (a7) at (3,0) [inner sep=1mm,fill=black] {};   
		\node[vertex] (a8) at (6,0) [inner sep=1mm,fill=black] {};  
\draw[thick,black] (a1)--(a2) (a1)--(a5) (a1)--(a7) (a2)--(a3) (a3)--(a4) (a4)--(a6) (a4)--(a8) (a5)--(a6) (a7)--(a8);     
\node[vertex] (A) at (4.5,-5) [inner sep=1.mm,fill=white] {$E_8$};    
\node[vertex] (b1) at (12,0) [inner sep=1mm,fill=black] {};
		\node[vertex] (b2) at (15,3) [inner sep=1mm,fill=black] {};  
		\node[vertex] (b3) at (18,3) [inner sep=1mm,fill=black] {};   
		\node[vertex] (b4) at (20,1.5) [inner sep=1mm,fill=black] {};   
		\node[vertex] (b5) at (15,-3) [inner sep=1mm,fill=black] {};   
		\node[vertex] (b6) at (18,-3) [inner sep=1mm,fill=black] {};  
		\node[vertex] (b78) at (16.5,0) [inner sep=1mm,fill=black] {};   
\draw[thick,black] (b1)--(b2) (b1)--(b5) (b2)--(b3) (b3)--(b4) (b4)--(b6) (b5)--(b6) (b1)--(b78) (b78)--(b4); 
\node[vertex] (B) at (16.5,-5) [inner sep=1.mm,fill=white] {$E_7$};  
\node[vertex] (c1) at (24,0) [inner sep=1mm,fill=black] {};
		\node[vertex] (c2) at (27,3) [inner sep=1mm,fill=black] {};  
		\node[vertex] (c3) at (30,3) [inner sep=1mm,fill=black] {};   
		\node[vertex] (c4) at (32,1.5) [inner sep=1mm,fill=black] {};   
		\node[vertex] (c5) at (27,-3) [inner sep=1mm,fill=black] {};   
		\node[vertex] (c6) at (30,-3) [inner sep=1mm,fill=black] {};     
\draw[thick,black] (c1)--(c2) (c1)--(c5) (c2)--(c3) (c3)--(c4) (c4)--(c6) (c5)--(c6) (c1)--(c4);  
\node[vertex] (C) at (28.5,-5) [inner sep=1.mm,fill=white] {$E_{6,1}$}; 
	\node[vertex] (d1) at (0,-10) [inner sep=1mm,fill=black] {};
		\node[vertex] (d2) at (3,-7) [inner sep=1mm,fill=black] {};  
		\node[vertex] (d3) at (6,-7) [inner sep=1mm,fill=black] {};   
		\node[vertex] (d4) at (8,-8.5) [inner sep=1mm,fill=black] {};   
		\node[vertex] (d56) at (4.5,-13) [inner sep=1mm,fill=black] {};   
		\node[vertex] (d78) at (4.5,-10) [inner sep=1mm,fill=black] {};   
\draw[thick,black] (d1)--(d2) (d1)--(d56) (d1)--(d78) (d2)--(d3) (d3)--(d4) (d4)--(d56) (d4)--(d78);   
\node[vertex] (D) at (4.2,-15.2) [inner sep=1.mm,fill=white] {$E_{6,2}$};  
	\node[vertex] (e1) at (12,-10) [inner sep=1mm,fill=black] {};
		\node[vertex] (e2) at (15,-7) [inner sep=1mm,fill=black] {};  
		\node[vertex] (e3) at (18,-7) [inner sep=1mm,fill=black] {};   
		\node[vertex] (e4) at (20,-8.5) [inner sep=1mm,fill=black] {};   
		\node[vertex] (e56) at (16.5,-13) [inner sep=1mm,fill=black] {};   
\draw[thick,black] (e1)--(e2) (e1)--(e4) (e1)--(e56) (e2)--(e3) (e3)--(e4) (e4)--(e56);
\node[vertex] (E) at (16,-15.2) [inner sep=1.mm,fill=white] {$E_{5,1}$}; 
	\node[vertex] (f1) at (24,-10) [inner sep=1mm,fill=black] {};
		\node[vertex] (f23) at (27,-7) [inner sep=1mm,fill=black] {};     
		\node[vertex] (f4) at (32,-8.5) [inner sep=1mm,fill=black] {};   
		\node[vertex] (f56) at (28.5,-13) [inner sep=1mm,fill=black] {};   
		\node[vertex] (f78) at (28.5,-10) [inner sep=1mm,fill=black] {};   
\draw[thick,black] (f1)--(f23) (f1)--(f56) (f1)--(f78) (f23)--(f4) (f4)--(f56) (f4)--(f78); 
\node[vertex] (F) at (28.,-15.2) [inner sep=1.mm,fill=white] {$E_{5,2}$}; 
	\node[vertex] (g1) at (35,-10) [inner sep=1mm,fill=black] {};
		\node[vertex] (g23) at (39.5,-7) [inner sep=1mm,fill=black] {};  
		\node[vertex] (g4) at (41,-8.5) [inner sep=1mm,fill=black] {};   
		\node[vertex] (g56) at (39.5,-13) [inner sep=1mm,fill=black] {};   
\draw[thick,black] (g1)--(g23) (g1)--(g4) (g1)--(g56) (g23)--(g4) (g4)--(g56);\node[vertex] (G) at (38.5,-15.2) [inner sep=1.mm,fill=white] {$A_{4}$}; 
		\end{tikzpicture}
	\end{center}
	\caption{All graphs corresponding to $2\leq a \leq b \leq c \leq 4$ in $G_2^*$ of type~III.} \label{fig:2abc4}
	\end{figure}
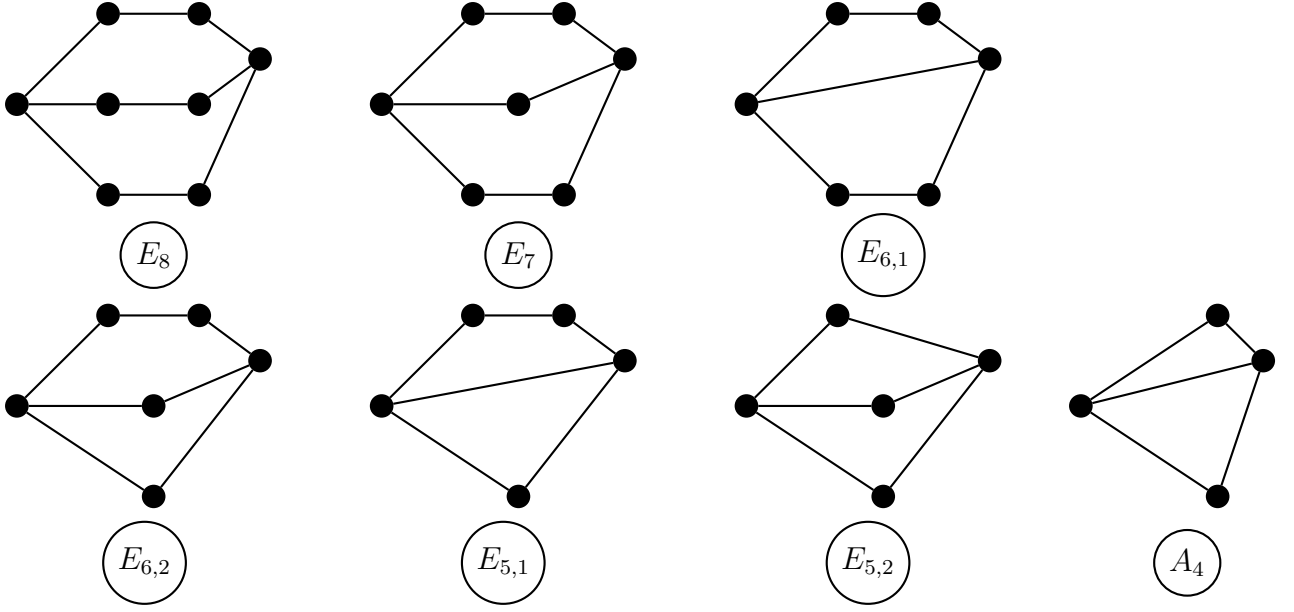
	
\begin{align*}
&\N(A_4)=14 \,,~~ \N(A_4)_v \leq 8\,,\\
&\N(E_{5,2})=26 \,,~~ \N(E_{5,2})_v \leq 15\,,\\
&\N(E_{5,1})= 24\,,~~ \N(E_{5,1})_v \leq 14\,,\\
&\N(E_{6,2})= 42\,,~~ \N(E_{6,2})_v \leq 25\,,\\
&\N(E_{6,1})= 40\,,~~ \N(E_{6,1})_v \leq 25\,,\\
&\N(E_{7})=66 \,,~~ \N(E_{7})_v \leq 41\,,\\
&\N(E_{8})=100 \,,~~ \N(E_{8})_v \leq 64\,,
\end{align*}
where $v$ is any vertex in these graphs (the details are omitted). Recall that $n\geq 8$ and that $G_2 \in \mathbb{B}_n$ consists of
$$G_2^*\in \{A_4,E_{5,2}, E_{5,1}, E_{6,2}, E_{6,1}, E_{7}, E_{8}\}$$ and pendant edges, all of which are attached to the same vertex $v$ of $G_2^*$. With $m=|V(G_2^*)|$ and $u$ the center of $S_{n-m+1}$, we employ Lemma~\ref{Lem:identify} to obtain
\begin{align*}
\N(G_2)&=\N(G_2^*)+\N(S_{n-m+1}) -1 + (\N(G_2^*)_v-1)(\N(S_{n-m+1})_u - 1)\\
&=\N(G_2^*)+ (2^{n-m}+n-m) -1 + (\N(G_2^*)_v-1)(2^{n-m}- 1)\,.
\end{align*}
Hence, the following hold:
\begin{align*}
\N(G_2) \leq ~~& 26 + (2^{n-5}+n-5) -1 + (15 -1)(2^{n-5}- 1)=15 \cdot 2^{n-5}+6+n\\
&= 2^{n-1} -2^{n-5}+6+n < 2^{n-1} +1+n = \N(R_n)
\end{align*}
for $G_2^* \in \{E_{5,2},E_{5,1}\}$, and
\begin{align*}
\N(G_2)  \leq & ~~ 42 + (2^{n-6}+n-6) -1 + (25 -1)(2^{n-6}- 1)=25 \cdot 2^{n-6}+11+n\\
&= 2^{n-1} -7 \cdot 2^{n-6}+11+n < 2^{n-1} +1+n = \N(R_n)
\end{align*}
for $G_2^* \in \{E_{6,2},E_{6,1}\}$, and
\begin{align*}
\N(G_2)  \leq & ~~ 66 + (2^{n-7}+n-7) -1 + (41 -1)(2^{n-7}- 1)=41 \cdot 2^{n-7}+18+n\\
&= 2^{n-1} -23 \cdot 2^{n-7}+18+n < 2^{n-1} +1+n = \N(R_n)
\end{align*}
for $G_2^* =E_7$, and
\begin{align*}
\N(G_2)  \leq & ~~ 100 + (2^{n-8}+n-8) -1 + (64 -1)(2^{n-8}- 1)=64 \cdot 2^{n-8}+28+n\\
&= 2^{n-1} -64 \cdot 2^{n-8}+28+n < 2^{n-1} +1+n = \N(R_n)
\end{align*}
for $G_2^* =E_8$. In all these cases, we get $\N(G_2) < \N(R_n) =\N(B_n)-1 < \N(B_n)$. Since the case where $G_2^* =A_4$ defines the graph $B_n$, we are done with he proof of the theorem.
\end{itemize}
In particular, $R_n$ is the second-largest $n$-vertex bicyclic graph with respect to the number of connected sets.
\end{proof}

\medskip
\section*{Statements and Declarations}

The author did not receive support from any organization for the submitted work. The author has no competing interests to declare that are relevant to the content of this article.

\end{document}